\definecolor{darkblue}{rgb}{0,0,0.7}
\definecolor{darkred}{rgb}{0.7,0,0}
\newcommand\defi[1]{\textit{\color{blue}#1}}    
\renewcommand\gcd{\operatorname{gcd}}
\newcommand\pdim{\operatorname{pdim}}
\newcommand\rank{\operatorname{rank}}
\newcommand\Tor{\operatorname{Tor}}
\newtheorem{thm}{Theorem}[section]
\newtheorem{prop}[thm]{Proposition}
\newtheorem{lemma}[thm]{Lemma}
\newtheorem{conj}[thm]{Conjecture}
\newtheorem{cor}[thm]{Corollary}
\newtheorem{defn}[thm]{Definition}
\newtheorem{rem}[thm]{Remark}
\newtheorem{example}[thm]{Example}
\begin{document}

\title{Exposed circuits, linear quotients, and chordal clutters}

\author{Anton Dochtermann}
\address{Department of Mathematics, Texas State University, San Marcos}
\email{dochtermann@txstate.edu}

\keywords{Chordal graph, chordal clutter, monomial ideal, Betti numbers, linear quotients, linear resolution, elementary collapse, shellable simplicial complex}


\date{\today}

\begin{abstract}
A graph $G$ is said to be chordal if it has no induced cycles of length four or more. In a recent preprint Culbertson, Guralnik, and Stiller give a new characterization of chordal graphs in terms of sequences of what they call `edge-erasures'.  We show that these moves are in fact equivalent to a linear quotient ordering on $I_{\overline{G}}$, the edge ideal of the complement graph. Known results imply that $I_{\overline G}$ has linear quotients if and only if $G$ is chordal, and hence this recovers an algebraic proof of their characterization.  We investigate higher-dimensional analogues of this result, and show that in fact linear quotients for more general circuit ideals of $d$-clutters can be characterized in terms of removing exposed circuits in the complement clutter.  Restricting to properly exposed circuits can be characterized by a homological condition. This leads to a notion of higher dimensional chordal clutters which borrows from commutative algebra and simple homotopy theory.  The interpretation of linear quotients in terms of shellability of simplicial complexes also has applications to a conjecture of Simon regarding the extendable shellability of $k$-skeleta of simplices.  Other connections to combinatorial commutative algebra, chordal complexes, and hierarchical clustering algorithms are explored.
\end{abstract}

\maketitle
\section{Introduction}
Chordal graphs are a widely studied class of combinatorial objects, with connections to various algorithmic and structural questions and generalizations in a variety of directions.  Perhaps a major reason for their wide appeal is their various characterizations in terms of seemingly unrelated properties, incorporating topological, combinatorial, and algebraic notions.  For instance the clique complex of a chordal graph has collapsible components, whereas the independence complex of a chordal graph is known to be vertex decomposable \cite{DocEng}, which in particular implies that is has the homotopy type of a wedge of spheres.

Recently in \cite{CulGurSti} a new characterization of chordal graphs was given in terms of performing a series of `edge-erasures' on a complete graph.  For this we say that an edge $e$ is a graph $G$ is \emph{exposed} if $e$ is contained in a unique maximal clique $K$ of $G$.  We say that $e$ is \emph{properly exposed} if $|K| > 2$  (i.e. $e$ is contained in some triangle).  If $G$ is a graph and $e \in G$ is properly exposed we say that $G - \{e\}$ is obtained from $G$ via an  \emph{edge erasure}.  With this notation the authors of \cite{CulGurSti} prove the following.

\begin{thm}[\cite{CulGurSti}]\label{thm:CGS}
A connected graph $G$ is chordal if and only if $G$ can be obtained from a complete graph by a sequence of edge erasures.
\end{thm}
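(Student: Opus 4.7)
The plan is to translate the combinatorial edge-erasure moves into operations on the edge ideal of the complement graph $I_{\overline{G}}$, and then invoke the known characterization of chordality in terms of linear quotients that the introduction already gestures at. Explicitly, a sequence of edge erasures taking $K_n$ to $G$ is simply a linear ordering $e_1, e_2, \ldots, e_m$ of the edges of $\overline{G}$, namely the order of removal.

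First I would establish the dictionary: the sequence $e_1, \ldots, e_m$ is a valid erasure sequence (each $e_k$ properly exposed in $G_{k-1} := K_n \setminus \{e_1, \ldots, e_{k-1}\}$) if and only if it is a linear-quotient ordering on the generators of $I_{\overline{G}}$. The key is a direct colon-ideal calculation: writing $e_k = \{u,v\}$, the ideal $(x_{e_1}, \ldots, x_{e_{k-1}}) : x_u x_v$ is generated by the monomials $x_{e_i}/\gcd(x_{e_i}, x_u x_v)$, each a single variable if $e_i$ meets $e_k$ and the full quadratic $x_a x_b$ if $e_i = \{a,b\}$ is disjoint from $e_k$. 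Requiring every minimal generator to be a variable unwinds to the condition that the common neighborhood $N_{G_{k-1}}(u,v)$ be a clique in $G_{k-1}$, which is exactly the condition that $e_k$ is \emph{exposed} in $G_{k-1}$.

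With the dictionary in hand, the theorem reduces to the classical statement that $I_{\overline{G}}$ admits a linear-quotient ordering if and only if $G$ is chordal (Herzog--Hibi--Zheng, refining Fr\"oberg). For the direction from erasure sequence to chordality, any valid erasure ordering is automatically a linear-quotient ordering (properly exposed implies exposed), so $G$ is chordal. For the converse direction, a connected chordal $G$ has \emph{some} linear-quotient ordering on $I_{\overline{G}}$; what one must do is realize it as a \emph{properly} exposed, and not merely exposed, erasure sequence, which is where connectedness enters.

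The main obstacle is precisely this upgrade from exposed to properly exposed. The linear-quotient condition alone only ensures $N_{G_{k-1}}(u,v)$ is a clique, possibly empty, whereas properly exposed demands non-emptiness. To handle this I would observe that a properly exposed edge is never a bridge (the unique max clique supplies a triangle through the edge), so such erasures preserve connectedness; starting from the connected $K_n$, each intermediate $G_{k-1}$ is connected. Combined with the structural fact that in a connected chordal graph any non-edge at graph-distance two has a non-empty clique as its common neighborhood (absence of induced $4$-cycles forces mutual adjacency of common neighbors), I can inductively build, or equivalently inductively reverse, a linear-quotient ordering whose every step is properly exposed, completing the proof.
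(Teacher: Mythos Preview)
Your overall strategy matches the paper's exactly: establish the dictionary between exposed edges and linear divisors via the colon-ideal computation (this is Theorem~\ref{thm:main} for $d=2$), then invoke Fr\"oberg and Herzog--Hibi--Zheng to get the equivalence between chordality and linear quotients. Your identification of ``$e_k$ exposed in $G_{k-1}$'' with ``$N_{G_{k-1}}(u)\cap N_{G_{k-1}}(v)$ is a clique'' is correct and is precisely what the paper proves.

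The gap is in your upgrade from exposed to properly exposed. The structural fact you cite---that a distance-two non-edge in a connected chordal graph has a non-empty clique as common neighborhood---is true but is not the right lever: non-emptiness is automatic once the distance is two, and the clique property is already guaranteed by exposedness. What you actually need is that the relevant non-edge \emph{has} distance two, and your sketch does not supply this. The paper's argument (Corollary~\ref{cor:erasures}) instead shows directly that if an exposed edge $uv$ in a chordal graph $H$ is not properly exposed, then $H-uv$ is disconnected: otherwise a shortest $u$--$v$ path in $H-uv$ together with $uv$ would form an induced cycle of length $\geq 4$ in $H$. Since every intermediate $G_{k-1}$ is chordal (a prefix of a linear-quotient ordering still has linear quotients) and deletions cannot reconnect a graph, connectedness of $G$ forces every step of \emph{any} exposed-erasure sequence to be properly exposed---no new ordering needs to be constructed. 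Your alternative of ``inductively building'' from $G$ toward $K_n$ by adding distance-two edges would also need justification that chordality is preserved, and in general it is not: take two triangles $uaw$ and $vbw$ sharing the vertex $w$, together with the edge $ab$; this graph is chordal and $uv$ is a non-edge at distance two, yet $G+uv$ contains the induced $4$-cycle $u\,a\,b\,v$.
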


As the authors point out, this description of a chordal graph in terms of sequences of edges has a different flavor than other characterizations in terms of simplicial neighborhoods of vertices, etc. In \cite{CulGurSti} this characterization is used to give a new algorithm for finding a minimum spanning tree in a finite metric space, a modified version of the greedy algorithm due to Kruskal.   The notion of an exposed edge is reminiscent of the \emph{elementary collapses} from simple homotopy theory and in particular makes sense in the more general context of hypergraphs and $d$-clutters. We discuss this further below.

Chordal graphs also make an appearance in the context of combinatorial commutative algebra.  Suppose $G = (V,E)$ is a graph on vertex set $V = [n] = \{1,2, \dots, n\}$. For a fixed field ${\mathbb K}$ one can construct the \emph{edge ideal} $I_G$ in the polynomial ring $R = {\mathbb K}[x_1,x_2, \dots, x_n]$.  By definition $I_G$ is the monomial ideal generated by quadratic monomials corresponding to edges of the $G$:
\[I_G = \langle x_ix_j: ij \in E(G) \rangle.\]
We note that any squarefree quadratic monomial ideal $I$ can be realized as the edge ideal of some graph.  Typical research questions involve studying how algebraic properties of $I_G$ relate to combinatorial properties of the underlying graph $G$.

A well-known theorem of Fr\"oberg \cite{Fro} characterizes chordal graphs in terms of an \emph{algebraic} property of the associated edge ideal:  a graph $G$ is chordal if and only if t $I_{\overline G}$, the edge ideal of the complement graph, has a \emph{linear resolution}.  The property of having a linear resolution describes a particularly low `complexity' in the relations among the generators of $I_G$ (along with the relations among the relations, etc.).  More recently in \cite{HerHibZhe} it has been shown that if an edge ideal $I_G$ has a linear resolution then in fact it has \emph{linear quotients}, a condition that is stronger in the case of more general ideals.   To say that an ideal $I$ has linear quotients means that there exists an ordering of the generators $I = \langle m_1, m_2, \dots, m_k \rangle$ of the ideal such that each colon ideal $(I_{j}:m_{j+1})$ is generated by a collection of linear forms.  Here $I_j = \langle m_1, m_2, \dots, m_j \rangle$. More details regarding these algebraic concepts are given in the next section.

The notion of an edge ideal of a graph generalizes to the context of \emph{$d$-clutters} (uniform hypergraphs where the edge set consists of $d$-subsets of $[n]$), where generators again correspond to circuits (the `edges' of the $d$-clutter). If $e$ is a circuit of a $d$-clutter ${\mathcal C}$ we use ${\bf x}_e$ to denote the squarefree monomial that it defines.   The notion of a exposed edge also generalizes : if  $e \in {\mathcal C}$ is a circuit of some $d$-clutter ${\mathcal C}$ on vertex set $[n] = \{1,2, \dots, n\}$  we say that $e$ is \emph{exposed} if it is uniquely contained in some maximal $d$-clique $K$.   We say that $e$ is \emph{properly exposed} if $|K| > d$. Our main result says that removing an exposed circuit from a $d$-clutter corresponds to adding a generator to the circuit ideal of the complement clutter that satisfies a particular algebraic property.

\newtheorem*{thm:main}{Theorem \ref{thm:main}}
\begin{thm:main}
	Suppose ${\mathcal C}$ is a $d$-clutter and let $e \in {\mathcal C}$ be a circuit in ${\mathcal C}$.   Then $e$ is an exposed circuit if and only if ${\bf x}_e$ is a linear divisor for the ideal $I_{\overline{\mathcal C}}$, where $\overline{\mathcal C}$ is the complement of ${\mathcal C}$. Moreover $e$ is contained in a unique maximal clique $K \subset {\mathcal C}$ if and only if the colon ideal $(I_{\overline{\mathcal C}}: {\bf x}_e)$ is generated by variables corresponding to vertices in the complement of $K$.
\end{thm:main}

As a consequence we obtain an algebraic proof of one part of Theorem \ref{thm:CGS}, namely that a graph $G$ is obtained from a complete graph through a sequence of removing exposed edges if and only if $I_{\overline G}$, the edge ideal of the complement graph, has linear quotients.  The result of \cite{HerHibZhe} then implies that this is the case if and only $G$ is chordal.   In addition, it is not hard to show that a chordal graph $G$ obtained from a sequence of exposed edges is \emph{connected} if and only if each edge in the sequence is \emph{properly} exposed.  The property of a graph $G$ being connected also has an algebraic interpretation in terms of the Betti table of the underlying edge ideal $I_{\overline{G}}$.  This generalizes to the context of circuit ideals of clutters where we establish the following higher dimensional analogue of Theorem \ref{thm:CGS}.

\newtheorem*{prop:cluttererasure}{Proposition \ref{prop:cluttererasure}}
\begin{prop:cluttererasure}
	A $d$-clutter ${\mathcal C}$ can be obtained from the complete $d$-clutter $K_n^d$ through a sequence of circuit erasures if and only if $I_{\overline{\mathcal C}}$ has linear quotients and 
	\[\pdim(I_{\overline {\mathcal C}}) < n-d.\]
\end{prop:cluttererasure}

Here $\pdim$ denotes the projective dimension, see below for a definition. Our result also provides a formula for the Betti numbers of an ideal with linear quotients (generated in a fixed degree) in terms of the combinatorics of the exposed faces removed in the complement, see Corollary \ref{cor:Betti}.

It is known that squarefree ideals with linear quotients are strongly related (via Alexander duality) to the notion of \emph{shellability} for a simplicial complex. A shellable simplicial complex $\Delta$ is said to be \defi{extendably shellable} if every shelling of a subcomplex of $\Delta$ can be continued to a shelling of $\Delta$.  Not all shellable complexes are extendably shellable (for instance certain $d$-dimensional simplicial spheres for $d \geq 3$, as discussed in \cite{Zie}) but a conjecture of Simon \cite{Sim} says that all $k$-skeleta of a simplex on $[n]$ are extendably shellable.  In Section \ref{sec:extend} we show how our results lead to a proof of this conjecture for the case $k \geq n-3$ (which was also obtained recently in \cite{BPZDec} using other methods).

\newtheorem*{cor:shell}{Corollary \ref{cor:shell}}
\begin{cor:shell}
For all $k \geq n-3$, the $k$-skeleton of a simplex on vertex set $[n]$ is extendably shellable.
\end{cor:shell}

As we have seen, a sequence of deleting (properly) exposed edges from a complete graph gives a characterization of (connected) chordal graphs.  Hence Theorem \ref{thm:main} provides a candidate for a notion of a higher dimensional `chordal complex' which borrows from simple homotopy and combinatorial commutative algebra.  In recent years several authors have introduced (mostly independent) notions of chordal complexes which generalize the various characterizations of chordal graphs to higher dimensions.  As far as we know the direct connection to free faces and elementary collapses has not been considered, although the recent preprint \cite{BigFar} explores similar territory. We briefly discuss these approaches in Section \ref{sec:higher} where we also offer a conjectural connection to the constructions discussed here.

The rest of the paper is organized as follows.  In Section \ref{sec:definitions} we review relevant definitions, including basic notions from clutter theory and combinatorial commutative algebra. In Section \ref{sec:results} we prove the results mentioned above and discuss some further corollaries and examples.  In Section \ref{sec:higher} we discuss applications to shellability and higher dimensional notions of chordal complexes. We end with some discussion regarding connections to data clustering (the original motivation for \cite{CulGurSti}), as well as some open problems.

{\bf Acknowledgements.}  We wish to thank Mina Bigdeli for helpful comments and correspondence, in particular regarding the connection to simplicial ridges.  We also thank Davide Bolognini, Sara Faridi, Jared Culbertson, Dan Guralnik, and Peter Stiller for fruitful conversations.  We are grateful to the two anonymous referees for their careful reading and the many suggestions that helped to improve the paper. Macaulay2 \cite{Mac} was used extensively to compute examples and we have included calculations in figures below.

\section{Definition and objects of study} \label{sec:definitions}

\subsection{Clutters and simplicial complexes}
We begin by recalling some relevant combinatorial notions. Recall that a \defi{clutter} on vertex set $[n] = \{1,2,\dots, n\}$ is a collection ${\mathcal C}$ of subsets of $[n]$, none of which properly contain another.  In this context the elements of ${\mathcal C}$ are called \defi{circuits}. In this paper we will usually restrict our attention to case where all circuits have the same size $d$ for some integer $d \geq 1$, in which case ${\mathcal C}$ will be called a \defi{$d$-clutter} (also sometimes called a $d$-uniform hypergraph). Note that a (simple) graph is the same as a $2$-clutter.  The \defi{complement} of a $d$-clutter ${\mathcal C}$, denoted $\overline {\mathcal C}$, is the $d$-clutter on the same vertex set $[n]$, where a $d$-subset $S \subset [n]$ is a circuit in $\overline {\mathcal C}$ if and only if $S \notin {\mathcal C}$.  An \defi{independent set} of ${\mathcal C}$ is a subset of $[n]$ containing no circuit.  For any integer $d \geq 2$ we use $K_n^d$ to denote the \defi{complete $d$-clutter} on vertex set $[n]$, which by definition consists of all $d$-subsets of $[n]$.  It is customary to use $K_n$ to denote $K_n^2$, the \emph{complete graph}.

If ${\mathcal C}$ is a $d$-clutter then a \defi{$d$-clique} (or just clique if the context is clear) is a nonempty collection of vertices $S \subset [n]$ with the property that $|S| < d$ or if $|S| \geq d$ every $d$ subset of $S$ is a circuit of ${\mathcal C}$.  The clique is said to be \defi{maximal} if $S$ is maximal with this property. If ${\mathcal C}$ is a $d$-clutter and $e \in {\mathcal C}$ is a circuit we say that $e$ is \defi{exposed} if $e$ is contained in a unique maximal clique $S$ in ${\mathcal C}$.  We say that $e$ is \defi{properly exposed} if $|S| > d$. 

A \defi{simplicial complex} $\Delta$ on vertex set $[n] = \{1,2,\dots, n\}$ is a collection of subsets of $[n]$ (called the \defi{faces} of $\Delta$) with the property that any subset of a face of $\Delta$ is itself a face of $\Delta$.  If a face $F \in \Delta$ has cardinality $d+1$ we say that has \emph{dimension} $d$ (and call it a $d$-face).  A subset $\sigma \subset [n]$ is a minimal non-face of $\Delta$ if $\sigma \notin \Delta$, but any proper subset of $\sigma$ is a face of $\Delta$. The maximal faces of $\Delta$ (under inclusion of sets) are called \defi{facets}, and $\Delta$ is said to be \defi{pure} if all facets have the same dimension.  The \defi{Alexander dual} of $\Delta$ is the simplicial complex $\Delta^\vee$ on vertex set $[n]$ with faces given by
\[\Delta^\vee = \{\sigma \subset [n]: [n] \backslash \sigma \notin \Delta\}.\]
\noindent
In particular the facets of $\Delta^\vee$ are given by the complements of minimal non-faces of $\Delta$.

A pure $d$-dimensional simplicial complex $\Delta$ is said to be \defi{shellable} if there is an ordering of the facets $F_1, F_2, \dots, F_s$ such that for all $k = 2,3, \dots, n$ the simplicial complex induced by
\[\big(\bigcup_{i=1}^{k-1} F_i  \big ) \cap F_ k\]
\noindent
is pure of dimension $d-1$.

Note that (arbitrary) clutters and simplicial complexes are related via the following constructions (we follow the conventions of \cite{Woo}).  For any clutter ${\mathcal C}$ on vertex set $[n]$ let
\[I({\mathcal C}) = \{\sigma \subset [n]: \text{$\sigma$ is an independent set of ${\mathcal C}$}\}\]
\noindent
denote the \defi{independence complex} of ${\mathcal C}$.  For any simplicial complex $\Delta$, let ${\mathcal C}(\Delta)$ denote the clutter consisting of all minimal non-faces of $\Delta$.  Then one can check that
\[{\mathcal C}\big(I({\mathcal C})\big) = {\mathcal C} \hspace{.15 in} \text{and} \hspace{.15 in} I\big({\mathcal C}(\Delta)\big) = \Delta.\]

\subsection{Circuit ideals and linear quotients}

Next we recall some relevant notions from commutative algebra.  We will fix a field ${\mathbb K}$ and let $R = {\mathbb K}[x_1, x_2, \dots, x_n]$ denote the polynomial ring on $n$ variables.   A $d$-clutter ${\mathcal C}$ on vertex set $[n] = \{1, 2, \dots, n\}$ naturally gives rise to a monomial ideal in $R$.  For this if $e = \{v_1, v_2, \dots, v_d\} \subset [n]$ is any subset of the vertex set we let
\[{\bf x}_e = x_{v_1}x_{v_2} \cdots x_{v_d}\]
denote the corresponding monomial in $R$.  We then let $I_{\mathcal C}$ denote the \defi{circuit ideal} of ${\mathcal C}$, generated by all such monomials corresponding to circuits of ${\mathcal C}$:
\[I_{\mathcal C} = \langle {\bf x}_e : e \in {\mathcal C} \rangle.\]
When $d=2$ we often say that $I_{\mathcal C}$ is the \emph{edge ideal} of the underlying graph ${\mathcal C}$. We note that any squarefree monomial ideal generated in degree $d$ can be thought of as the circuit ideal of a $d$-clutter (and vice versa) so these concepts are equivalent.  Quadratic squarefree monomial ideals are precisely the edge ideals of (simple) graphs.

We will be interested in homological properties of circuit ideals.  Given a graded ideal (or more generally a graded $R$-module) $I$, a \defi{free resolution of $I$} is an exact sequence
\begin{equation} \label{eq:res}
0 \leftarrow I \leftarrow F_0 \leftarrow F_1 \leftarrow \cdots \leftarrow F_p,
\end{equation}
where each
\[F_i = \bigoplus_{j \in {\mathbb Z}} R(-j)^{\beta_{i,j}}\]
is a free $R$-module and each map is a homogeneous module homomorphisms.  Here $R(-j)$ indicates the ring $R$ with the shifted grading, so that for all $a \in {\mathbb Z}$ we have
\[R(-j)_a = R_{a-j}.\]

\noindent
Note that replacing the last two maps in Equation \ref{eq:res} with $0 \leftarrow R/I \leftarrow R \leftarrow F_0$ provides a resolution of the quotient ring $R/I$, so we will sometimes move between the two notions.

 The resolution is said to be \defi{minimal} if the rank of each $F_i$ is minimum among all resolutions of $I$.  In this case we have $\beta_{i,j} = \beta_{i,j}(I) = \Tor_i^R(I,{\mathbb K})_j$, and these integers are called the \defi{graded Betti numbers} of $I$. The ordinary $i$th Betti number is given by $\beta_i = \sum_{j \in {\mathbb Z}} \beta_{i,j}$.   The \defi{projective dimension} of $I$, denoted $\pdim(I)$, is given by the length of a (and hence any) minimal resolution, so that
 \[\pdim(I) = \max \{i: \beta_i(I) \neq 0\}.\]

 For each $i =2,3,\dots p$, we can think of the maps $\partial_i:F_i \rightarrow F_{i-1}$ as matrices with entires in $R$, and the ideal $I$ is said to have a \defi{linear resolution} if all entries are linear forms.  If $I$ is generated in degree $d$ this is equivalent to having $\beta_{i,j} = 0$ whenever $i,j$ satisfy $j \neq i+d$.  We will often think of homological properties of an ideal that are preserved as we add one generator at a time.  For this we need the following notion.

\begin{defn}
Suppose $I \subset R$ is a monomial ideal generated, and suppose ${\bf x}_e$ is a squarefree monomial that is not a generator of $I$. Then we say ${\bf x}_e$ is a \emph{linear divisor} for $I$ if the colon ideal
\[(I:{\bf x}_e) = \{r \in R: r{\bf x}_e \in I \}\]
is generated by a subset of the variables $\{x_1, x_2, \dots, x_n\}$.
\end{defn}

\begin{defn}
A monomial $I$ is said to have \defi{linear quotients} if there exists an ordering of its generators $(m_1, m_2, \dots, m_g)$ such that $m_{j+1}$ is a linear divisor for $I_j$ for all $j = 1,2, \dots g-1$.

\noindent
Here for $j = 1, \dots, n$ we use the notation $I_j = \langle m_1, m_2, \dots, m_j \rangle$.
\end{defn}

The notion of an ideal with linear quotients was introduced by Herzog and Takayama in \cite{HerTak}.  The concept makes sense for arbitrary monomials ideals but here we will restrict ourselves to those that are squarefree and generated in a fixed degree (arising as the circuit ideal of some $d$-clutter ${\mathcal C}$).  Examples of such ideals include squarefree stable ideals as well as ideals generated by a collection of monomials whose support form the bases of a matroid.   

\begin{figure}[h] 
\includegraphics[scale = 0.425]{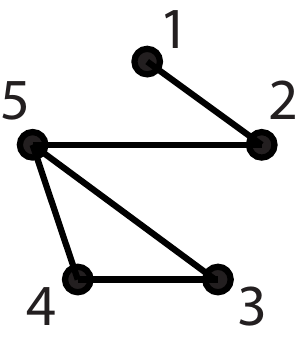}
\caption{A chordal graph $G$.}
\label{fig:example}
\end{figure}

 \begin{example} \label{ex:chordal}
 
 For a specific example consider the graph (2-clutter) $G$ depicted in Figure \ref{fig:example}.  The edge ideal of the complement graph $\overline G$ is given by
 \[I_{\overline G} = \langle x_1x_3, x_1x_4, x_1x_5, x_2x_3, x_2x_4 \rangle. \]
 
 \noindent
One can check that this ordering of the generators is in fact a linear quotient ordering for $I_{\overline G}$. For instance we have $I_4 = \langle x_1x_3, x_1x_4, x_1x_5, x_2x_3 \rangle$, $m_5 = x_2x_4$, and 
\[(I_4:m_5) =  \langle x_1, x_3 \rangle.\]

 \end{example}
 
Squarefree monomial ideals with linear quotients are closely related to shellable simplicial complexes, as the next observation indicates.  Here for a face $F \subset [n]$ we use $\overline F$ to denote the complement set, so that $\overline F = [n] \backslash F$.
 
 \begin{prop}\cite[Proposition 8.2.5]{HerHib} \label{prop:lin}
 Suppose $\Delta$ is a pure simplicial complex on the vertex set $[n]$. Then $F_1, F_2, \dots, F_s$ is a shelling order for $\Delta$ if and only if the ideal $\langle  {\bf x}_{\overline{F_1}}, {\bf x}_{\overline{F_2}}, \dots, {\bf x}_{\overline{F_s}} \rangle$ has linear quotients with respect to the given order.
 \end{prop}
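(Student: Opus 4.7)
The plan is to translate both conditions into a common combinatorial statement about the facet ordering. First I would compute the colon ideal $(I_{k-1}:m_k)$ explicitly, where $m_i = {\bf x}_{\overline{F_i}}$ and $I_{k-1} = \langle m_1, \dots, m_{k-1}\rangle$. Since all generators are squarefree monomials, standard monomial arithmetic gives
\[(I_{k-1}:m_k) = \langle m_i/\gcd(m_i, m_k) : i < k \rangle.\]
The support of $m_i/\gcd(m_i,m_k)$ is $\overline{F_i}\setminus \overline{F_k} = F_k\setminus F_i$, so that
\[(I_{k-1}:m_k) = \langle {\bf x}_{F_k \setminus F_i} : i < k \rangle.\]

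Next I would argue that $(I_{k-1}:m_k)$ is generated by variables if and only if for each $i<k$ there exist $j<k$ and $v \in F_k\setminus F_i$ with $F_k\setminus F_j = \{v\}$. The "if" direction is immediate, since such a $v$ yields a variable generator $x_v \in (I_{k-1}:m_k)$ dividing ${\bf x}_{F_k\setminus F_i}$. For the "only if" direction, if the colon is generated by variables, then every minimal variable generator must itself arise as ${\bf x}_{F_k\setminus F_j}$ with $|F_k\setminus F_j|=1$, and some such $x_v$ must divide each ${\bf x}_{F_k\setminus F_i}$, forcing $v \in F_k \setminus F_i$.

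To finish, I would invoke the standard equivalent formulation of the shelling axiom (essentially Björner--Wachs): for a pure simplicial complex $\Delta$, an ordering $F_1,\dots,F_s$ of facets is a shelling if and only if for every $k\geq 2$ and every $i<k$ there exist $j<k$ and $v\in F_k$ with $F_j\cap F_k = F_k\setminus\{v\}$ and $v\notin F_i$. This is word-for-word the condition derived above, so the two notions coincide.

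The main obstacle is purely bookkeeping: keeping track of which complement is being taken (inside $F_k$ versus inside $[n]$) and recognizing that the form of the shelling condition used above is precisely the one dual to linear quotients. No deeper input is required, as the proposition is essentially a dictionary between two equivalent combinatorial descriptions, one phrased on faces and the other on their Alexander-dual monomials.
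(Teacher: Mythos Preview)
The paper does not give its own proof of this proposition: it is quoted as a known result from \cite{HerHib} and used as a black box. Your argument is correct and is essentially the standard proof one finds in the Herzog--Hibi reference. The key computation $(I_{k-1}:m_k)=\langle {\bf x}_{F_k\setminus F_i}:i<k\rangle$ is right (using $\overline{F_i}\setminus\overline{F_k}=F_k\setminus F_i$), and your identification of ``colon generated by variables'' with the Bj\"orner--Wachs exchange form of the shelling axiom is exactly the dictionary that makes the proposition work. One small point of phrasing in your ``only if'' direction: rather than saying ``every minimal variable generator must itself arise as ${\bf x}_{F_k\setminus F_j}$,'' it is cleaner to argue that if $x_v$ lies in the monomial ideal $\langle {\bf x}_{F_k\setminus F_i}:i<k\rangle$ then, being of degree one, it must be divisible by (hence equal to) some generator ${\bf x}_{F_k\setminus F_j}$, forcing $F_k\setminus F_j=\{v\}$. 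With that adjustment the argument is complete; there is nothing further to compare, since the paper itself supplies no proof.
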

 
 One can check that the ideal $\langle  {\bf x}_{\overline{F_1}}, {\bf x}_{\overline{F_2}}, \dots, {\bf x}_{\overline{F_s}} \rangle = I_{\Delta^\vee}$, where $I_{\Delta^\vee}$ denotes the \emph{Alexander dual} of the Stanley-Reisner ideal of $\Delta$.

In \cite{HerTak} Herzog and Takayama study minimal resolutions of monomial ideals with linear quotients.  To describe their construction suppose that $I$ is a monomial ideal with linear quotients for some ordering $(m_1, m_2  \dots, m_k)$ of the generators. A minimal resolution of $I$ is obtained by iteratively constructing mapping cones as follows.  Each time we add a generator $m_{j+1}$ we have a short exact sequence of $R$-modules
\[0 \rightarrow R/(I_j: m_{j+1}) \rightarrow R/I_j \rightarrow R/I_{j+1} \rightarrow 0,\]
\noindent
where $I_j = \langle m_1, m_2, \dots, m_j \rangle$ is the ideal generated by the first $j$ monomials in our (ordered) generating set. By assumption the colon ideal $(I_j:m_{j+1})$ is generated by some subset of the variables, say $\{x_{j_1}, x_{j_2}, \dots, x_{j_\ell}\}$. Hence  $R/(I_j: m_{j+1})$ has a minimal resolution given by a Koszul complex ${\mathcal K}_\ell$ on $\ell$ generators.   Assuming we have a minimal resolution ${\mathcal F}$ for the ideal $I_j$ we obtain a minimal resolution of $I_{j+1}$ by constructing the mapping cone $C(f)$ for the map of complexes $f:{\mathcal K}_\ell \rightarrow {\mathcal F}$ induced by the short exact sequence above.  Recall that the complex $C(f)$ is given by
\[C(f) = {\mathcal K}_\ell[1] \oplus {\mathcal F},\]
so that the module in the $i$th homology degree of $C(f)$ has rank given by 
\begin{equation} \label {eq:betti}
\rank F_i + {\ell \choose i},
\end{equation}
\noindent
where $F_i$ is the module in the $i$th homology degree of the complex ${\mathcal F}$. A \emph{cellular} realization for this mapping cone construction (under some further conditions on the ideal) was described in \cite{DocMoh}. 

\begin{example}
Returning to the ideal discussed in Example \ref{ex:chordal} we have $I_4 = \langle x_1x_3, x_1x_4, x_1x_5, x_2x_3 \rangle$, which has a minimal resolution given by
\[{\mathcal F} = \; 0 \leftarrow I_4 \leftarrow R^4 \leftarrow R^4 \leftarrow R \leftarrow 0.\]
If we add the generator $m_5 = x_2x_4$ we see that the colon ideal $(I_4 : m_5) = \langle x_1, x_3 \rangle$ is generated by two variables and hence has a minimal resolution given by a Koszul complex
\[{\mathcal K} = \; 0 \leftarrow (I_4 : m_5) \leftarrow R^2 \leftarrow R \leftarrow 0.\]

\noindent
Taking the mapping cone of the map of complexes ${\mathcal K} \rightarrow {\mathcal F}$ induced by the inclusion $m_5 \rightarrow I_5$ we obtain a minimal resolution of $I_5 = I_{\overline{G}}$ given by
\[0 \leftarrow I_{\overline{G}} \leftarrow R^5 \leftarrow R^6 \leftarrow R^2 \leftarrow 0.\]

\end{example}

\section{Main results and discussion}\label{sec:results}

In this section we provide proofs of the results discussed in the introduction.  Recall that a $d$-clutter ${\mathcal C}$ on vertex set $[n]$ gives rise to a monomial ideal $I_{\overline {\mathcal C}} \subset {\mathbb K}[x_1, x_2, \dots, x_n]$, generated by all squarefree monomials of degree $d$ not appearing in ${\mathcal C}$.    Removing a circuit from ${\mathcal C}$ corresponds to adding a generator to $I_{\overline {\mathcal C}}$.  We then have the following.

\begin{thm} \label{thm:main}
Suppose ${\mathcal C}$ is a $d$-clutter for $d \geq 1$ and let $e \in {\mathcal C}$ be a circuit in ${\mathcal C}$.   Then $e$ is an exposed circuit if and only if ${\bf x}_e$ is a linear divisor for the ideal $I_{\overline{\mathcal C}}$, where $\overline{\mathcal C}$ is the complement of ${\mathcal C}$.  Moreover $e$ is contained in a unique maximal clique $K$ if and only if the colon ideal $(I_{\overline{\mathcal C}}: {\bf x}_e)$ is generated by variables corresponding to the vertices $[n] \backslash K$.
\end{thm}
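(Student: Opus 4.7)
The plan is to translate both sides of the equivalence into explicit conditions on the generators of the colon ideal and then match them via an auxiliary set of ``compatible'' vertices.

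First I would compute the colon ideal directly.  Because $I_{\overline{\mathcal{C}}}$ is squarefree monomial ideal generated in degree $d$, the standard identity for colons of squarefree monomial ideals gives
\[
(I_{\overline{\mathcal{C}}} : \mathbf{x}_e) \;=\; \langle\, \mathbf{x}_{f\setminus e} \,:\, f \subseteq [n],\ |f|=d,\ f \notin \mathcal{C}\,\rangle.
\]
Note $f = e$ contributes nothing since $e \in \mathcal{C}$, and whenever $v \in e$, no generator of the colon ideal is divisible by $x_v$ alone (since $e$ itself is the only $d$-subset of $e$).  So the variable generators in the colon ideal are precisely those $x_v$ with $v \notin e$ for which there exists $u \in e$ with $(e\setminus\{u\}) \cup \{v\} \notin \mathcal{C}$.

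Next I would introduce the auxiliary set
\[
W \;=\; \bigl\{ v \in [n]\setminus e \,:\, (e \setminus \{u\}) \cup \{v\} \in \mathcal{C}\ \text{for every}\ u \in e \bigr\}.
\]
An elementary check shows $v \in W$ if and only if $e \cup \{v\}$ is a clique of $\mathcal{C}$ (the $d$-subsets of $e \cup \{v\}$ are exactly $e$ together with the sets $(e\setminus\{u\})\cup\{v\}$).  Consequently the set of variables lying in $(I_{\overline{\mathcal{C}}}:\mathbf{x}_e)$ is precisely $\{x_v : v \in [n]\setminus(e\cup W)\}$.  The colon ideal is generated by these variables if and only if every remaining generator $\mathbf{x}_{f\setminus e}$ (for $f \notin \mathcal{C}$ of size $d$) is divisible by some such $x_v$, i.e.\ if and only if no non-circuit $d$-subset $f$ satisfies $f \subseteq e \cup W$.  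Equivalently, every $d$-subset of $e\cup W$ is a circuit, which is exactly the statement that $e \cup W$ is a clique.

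The last step is to identify ``$e\cup W$ is a clique'' with ``$e$ is exposed,'' and in that case to identify $e\cup W$ with the unique maximal clique $K$.  One direction is immediate: if $e\cup W$ is a clique, then any clique containing $e$ can only add vertices lying in $W$ (by definition of $W$), so $e \cup W$ contains every clique containing $e$, making it the unique maximal one.  For the converse, suppose $e$ lies in a unique maximal clique $K$; then $K \subseteq e \cup W$ automatically, and any $v \in W$ gives a clique $e\cup\{v\}$ which, being contained in some maximal clique containing $e$, must satisfy $e\cup\{v\}\subseteq K$.  Hence $W \subseteq K$ and $K = e \cup W$, which in particular is a clique.  Combining these, the colon ideal is generated by variables if and only if $e$ is exposed, and in that case the variable generators are exactly $\{x_v : v \in [n]\setminus K\}$, proving the ``moreover'' clause.

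The only step requiring care is the bookkeeping in the second paragraph, where one must verify that the non-variable generators $\mathbf{x}_{f\setminus e}$ with $|f\cap e| \le d-2$ are indeed redundant precisely under the clique condition; everything else reduces to manipulating definitions.
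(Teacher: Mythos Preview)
Your argument is correct and follows essentially the same approach as the paper's proof: both compute the colon ideal via the standard formula $(I_{\overline{\mathcal C}}:\mathbf{x}_e)=\langle \mathbf{x}_{f\setminus e}:f\notin\mathcal C\rangle$, identify the variable generators as those $x_v$ for which $e\cup\{v\}$ fails to be a clique, and then show that ``generated by variables'' is equivalent to the complementary vertex set (your $e\cup W$, the paper's $K$) being a clique, hence the unique maximal clique containing $e$. Your introduction of the set $W$ packages the two directions into a single characterization and makes the bookkeeping slightly cleaner than the paper's separate forward/backward arguments, but the mathematical content is the same.
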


\begin{proof}
Suppose ${\mathcal C}$ is a clutter on vertex set $[n] = \{1,2,\dots, n\}$, and let $I_{\overline {\mathcal C}} \subset R = {\mathbb K}[x_1, x_2, \dots, x_n]$ denote the circuit ideal of its complement .  Suppose $e =\{v_1, v_2, \dots, v_d\}$ is a circuit in ${\mathcal C}$.

For one direction of the theorem suppose $e$ is an exposed circuit, so that $e$ is contained in a unique maximal $d$-clique of ${\mathcal C}$.  Without loss of generality suppose the $d$-clique consists of the vertices $K = \{1,2, \dots, k\}$. Let $I = I_{\overline {\mathcal C}}$ denote the edge ideal of the complement clutter $\overline {\mathcal C}$, and let $I_e = \langle I, {\bf x}_e \rangle$ denote the ideal obtained by adding the monomial ${\bf x}_e$  as another generator. We then have the inclusion $I \rightarrow I_e$.

We claim that the colon ideal
\[(I:{\bf x}_e) = \{r \in R: r{\bf x}_e \in I\}\]
is generated by the variables $X = \{x_{k+1}, \dots, x_n \}$, corresponding to variables \emph{not} in the clique $K$. To see this first note that if $x_\ell \in X$ then $\ell \cup (K \backslash \{i\})$ must \emph{not} contain a circuit of ${\mathcal C}$ for some $i = 1, 2, \dots, k$  (otherwise since $e$ is a circuit we would obtain a clique $e \cup \{\ell\}$ in ${\mathcal C}$ of size $d+1$, meaning that we could either add $\ell$ to $K$ to obtain a larger clique or else have that $e$ is contained in two distinct maximal cliques - either way a contradiction).  Without loss of generality suppose $\{\ell, v_2, v_3, \dots, v_d\}$ is missing from ${\mathcal C}$, so that $x_\ell x_{v_2} x_{v_3} \cdots x_{v_d} \in I$ and hence $x_\ell {\bf x}_e \in I$.
 We conclude that $x_\ell \in (I:{\bf x}_e)$, and hence $\langle x_{k+1}, \dots, x_n \rangle \subset (I:{\bf x}_e)$.
 
 Next suppose $m \in (I:{\bf x}_e)$, so that $m{\bf x}_e \in I$.  We claim that $m$ is contained in the ideal generated by the variables $X = \{x_{k+1}, \dots, x_n \}$. For this we will use the fact that since $I$ and $\langle {\bf x}_e\rangle$ are both monomial, the colon ideal $(I:{\bf x}_e)$ is also monomial (\cite[Proposition 1.2.2]{HerHib}).  In fact a (possibly redundant) set of generators of $(I:{\bf x}_e)$ is given by the collection
 \[\{u/ \gcd (u,{\bf x}_e): u \in G(I) \}.\]
Here $G(I)$ denotes a set of generators of $I$. To show that $m$ is contained in the desired ideal it's enough to show that each such generator of $(I:{\bf x}_e)$ contains some variable from among $\{x_{k+1}, \dots, x_n\}$.  But recall that $K = \{1,2,\dots, k\}$ is a $d$-clique and every generator $u \in G(I)$ is given by noncircuits.  Hence every generator $u \in G(I)$ contains some variable among $\{x_{k+1}, \dots, x_n\}$.  Since $\{v_1, v_2, \dots, v_d \} \subset \{1,2, \dots, k\}$ we have that $u/\gcd(u,{\bf x}_e)$ must also contain that variable.  We conclude that every generator of $(I:{\bf x}_e)$ contains some element among the variables $X$, and hence $(I:{\bf x}_e) \subset \langle x_{k+1}, \dots, x_n \rangle$.  

For the other direction suppose ${\bf x}_e = x_{v_1}x_{v_2} \cdots x_{v_d}$  is a linear divisor for the ideal $I = I_{\overline {\mathcal C}}$, so that the colon ideal $(I: {\bf x}_e)$ is generated by a subset of the variables.  Without loss of generality suppose
\[(I:{\bf x}_e) = \langle x_1, x_2, \dots, x_k \rangle.\]
We claim that the vertex set $S = \{k+1, k+2, \dots, n\}$ forms a maximal $d$-clique in the clutter ${\mathcal C}$, and that the circuit $e = \{v_1, v_2, \dots, v_d\}$ is uniquely contained in this clique. For this suppose $W = \{w_1, w_2, \dots, w_d\} \subset S$ and for a contradiction suppose $W$ did \emph{not} form a $d$-clique.  Then ${\bf x}_W$ would be a generator of $I$.  So then ${\bf x}_W {\bf x}_e \in I$ and hence ${\bf x}_W \in (I: {\bf x}_e) = \langle x_1, x_2, \dots, x_k \rangle$, a contradiction.

To show that $S$ is maximal suppose $a \leq k$ with the property that every $d$ subset of $\{a\} \cup S$ forms a circuit of ${\mathcal C}$. Then we have that $x_a{\bf x}_W$ is not a generator of $I = I_{\overline {\mathcal C}}$ for all $W \subset S$ with $|W| = d-1$.   But since $x_a$ is a generator of $(I:{\bf x}_e)$ we have $x_a {\bf x}_e \in I$ so that $x_a{\bf x}_W$ is a generator of $I$ for some $W \subset \{v_1, v_2, \dots, v_d\} \subset S$ with $|W| = d-1$, a contradiction.

Finally we show that $e$ is uniquely contained in $S$. For this suppose that $e = \{v_1, v_2, \dots, v_d\}$ is contained in some other maximal clique $T$, distinct from $S$.  Then there must be some vertex $t \in T$ (so that $t \leq k$) such that every $d$-subset of $\{t\} \cup \{v_1, v_2, \dots, v_d\}$ is a circuit in ${\mathcal C}$. But $x_t$ is a generator of $(I:{\bf x}_e)$ so that $x_t {\bf x}_W$ is a generator of $I$ for some for some subset $W \subset S$, again a contradiction.  The result follows.
\end{proof}

As mentioned in the introduction, it is known that if $\Delta$ is a simplicial complex then $I_\Delta$, the Stanley-Reisner ideal of $\Delta$, has linear quotients if and only if the Alexander dual $I^\vee$ has a shellable Stanley-Reisner complex (see also Proposition \ref{prop:lin}).  Presumably one could use this characterization to give a more combinatorial proof of Theorem \ref{thm:main}.  We will return to the connection to shellability in Section \ref{sec:applications}.

Note that in the case of a linear quotient ordering we are building the ideal one generator at a time, and in the complement this corresponds to deleting circuits from a complete $d$-clutter on vertex set $[n]$. For the case of $d=2$ we explicitly state the corollary.

\begin{cor} \label{cor:graph}
Suppose $G$ is a graph and let $e = ij$ be an edge in $G$.  Then $e = ij$ is an exposed edge if and only if $x_ix_j$ is a linear divisor for the ideal $I_{\overline G}$, where $\overline G$ is the complement of $G$.
\end{cor}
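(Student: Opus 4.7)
The plan is to obtain Corollary \ref{cor:graph} as the direct specialization of Theorem \ref{thm:main} to the case $d=2$. First I would check that the translation between graphs and $2$-clutters preserves all of the relevant terminology: a $2$-clutter on $[n]$ is by definition the same as a simple graph on $[n]$, a circuit of size $2$ is an edge, and the definition of \emph{exposed edge} from the introduction (contained in a unique maximal clique $K$) is exactly the $d=2$ instance of the \emph{exposed circuit} definition from Section \ref{sec:definitions}. In particular, for an edge $e = ij$ one has ${\bf x}_e = x_i x_j$, matching the statement.

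With this dictionary in place, I would simply invoke Theorem \ref{thm:main} with $d=2$ and ${\mathcal C} = G$, which gives the stated biconditional: the edge $e = ij$ is exposed in $G$ if and only if $x_i x_j$ is a linear divisor for $I_{\overline G}$. Since Theorem \ref{thm:main} is stated and proved for all $d \geq 1$, and the statement of the corollary is literally the $d=2$ restriction of its first sentence, no additional argument is needed. If desired, one can also record the consequence of the ``moreover'' clause of Theorem \ref{thm:main}: if $e = ij$ is contained in the unique maximal clique $K \subseteq [n]$, then $(I_{\overline G} : x_i x_j)$ is generated precisely by $\{x_\ell : \ell \in [n] \setminus K\}$.

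There is essentially no obstacle here, since the corollary is a direct application of an already-proved theorem; the only thing to be careful about is confirming that the graph-theoretic notion of ``exposed'' from the introduction coincides with the specialization of the clutter-theoretic notion used in Theorem \ref{thm:main}, which is immediate from the definitions.
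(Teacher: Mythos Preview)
Your proposal is correct and matches the paper's approach exactly: the corollary is stated immediately after Theorem \ref{thm:main} with no separate proof, as it is simply the $d=2$ specialization. Your care in verifying that the graph-theoretic and clutter-theoretic notions of ``exposed'' coincide is appropriate but, as you note, immediate.
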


Combining this with the results of \cite{HerHibZhe} we obtain another proof of the result from \cite{CulGurSti} mentioned in the introduction.  Recall that by definition an edge erasure is the result of removing an edge $e$ that is \emph{properly} exposed.  For the case of the graphs this characterizes (complements of) chordal graphs that are \emph{connected}.

 \begin{cor}\cite[Theorem~8]{CulGurSti} \label{cor:erasures}
A graph $G$ can be obtained from a complete graph through a sequence of removing exposed edges if and only if $G$ is chordal.  In this case $G$ is connected if and only if each removal is an edge erasure.
\end{cor}

\begin{proof}
From Theorem \ref{thm:main} we have that removing an exposed edge $e = ij$ from a graph $H$ corresponds to adding the generator $x_ix_j$ that is a linear divisor in $I_{\overline H}$.  Hence performing a sequence of edge erasures on a complete graph to obtain a graph $G$ results in an ideal $I_{\overline G}$ that has linear quotients.  An arbitrary (monomial) ideal with linear quotients has a linear resolution, and hence in this case $G $ (the complement of ${\overline G}$) is chordal by Fr\"oberg's Theorem. On the other hand we know that if $G$ is chordal we have that $I_{\overline G}$ has a linear resolution. In \cite{HerHibZhe} is it shown that edge ideals with linear resolutions in fact have linear quotients.  Suppose $(g_1, g_2, \dots, g_k)$ is some linear quotient ordering for the ideal $I_{\overline G}$, so that for all $i = 2, \dots, g$ we have that $g_i$ is a linear divisor for the ideal $\langle g_1, \dots, g_{i-1}\rangle$.  From Theorem \ref{thm:main} this implies that the edge corresponding to $g_1$ is exposed in $K_n$, that $g_2$ is exposed in $K_n \backslash g_1$, etc.  Hence $G$ can be obtained by a sequence of removing exposed edges starting from a complete graph.

Next suppose that $G$ is obtained from the complete graph $K_n$ via a sequence of removing exposed edges. Let $K_n = H_1, H_2, \dots, H_k = G$ be the sequence of graphs obtained by removing these exposed edges, so that $e_i$ is an exposed edge in $H_i$. We claim that $G$ is connected if and only if each exposed edge was in fact \emph{properly} exposed.  For one direction, suppose that $G$ is disconnected.  Since we obtained $G$ from removing edges from $K_n$ then at some point in the process of removing exposed edges the sequence of graphs first became disconnected.  Suppose $H_{j+1}$ is the first graph is the sequence that is disconnected.  This implies that $e_j = v_1v_2$ is a bridge edge, and in particular not contained in any cycle in $H_j$. This implies that $e_j$ is not contained in any larger clique and hence is not properly exposed. For the other direction suppose one of the edges $e= vw$ in the deletion sequence was not properly exposed.  We claim that removing this edge results in a disconnected graph. If not, there must be another path in the graph that connects the vertices $v$ and $w$, say $v = v_1, v_2, \dots, v_k =w$.  Choose this path to be of minimum length.  If $k = 3$, the the set $\{v, v_2, w\}$ forms a clique, a contradiction to the assumption that $e$ was not properly exposed.  But if $k > 4$ then the vertices $v_1, v_2, \dots,  v_k$ forms a $k$-cycle, which must have a chord since the underlying graph is chordal.  This chord provides a shorter path from $v$ to $w$, a contradiction to the choice of $v_1, \dots, v_k$. We conclude that the graph in fact became disconnected by removing $e$. The result follows.
\end{proof}

\begin{figure}[h]
	\includegraphics[scale = 0.415]{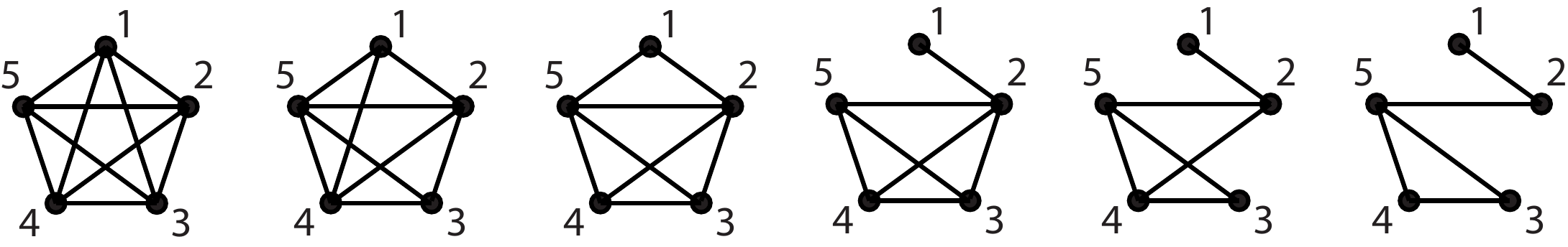}
	\caption{A sequence of erasures resulting in the graph $G$.  For example in the last step, the edge $24$ is contained in the maximal clique $245$, and the relevant colon ideal is given by $(I_4:x_2x_4) = \langle x_1, x_3 \rangle$.}
\end{figure}

We wish to generalize the connectivity condition for graphs to the context of $d$-clutters for $d > 2$.  For this we use the following algebraic characterization of connectivity.  Here $\pdim$ refers to the \emph{projective dimension} of the underlying module.

\begin{lemma}
A graph $G$ on vertex set $[n]$ is connected if and only if
\[\pdim(I_{\overline G}) < n-2.\]
\end{lemma}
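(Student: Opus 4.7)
The plan is to translate the projective-dimension condition into a reduced-homology statement via Hochster's formula. The key observation is that $I_{\overline G}$ is precisely the Stanley--Reisner ideal of the clique complex $\Delta = \mathrm{Cl}(G)$, since the minimal non-faces of $\mathrm{Cl}(G)$ are exactly the non-edges of $G$. Using the standard identity $\pdim(R/I_{\overline G}) = \pdim(I_{\overline G}) + 1$, the condition $\pdim(I_{\overline G}) < n-2$ becomes $\pdim(R/I_{\overline G}) \leq n-2$, i.e.\ the vanishing of $\beta_{i,j}(R/I_{\overline G})$ for all $i \geq n-1$ and all $j$. (Equivalently, by Auslander--Buchsbaum, $\operatorname{depth}(R/I_{\overline G}) \geq 2$.)

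I would then invoke Hochster's formula,
\[
\beta_{i,j}(R/I_\Delta) \;=\; \sum_{W \subseteq [n],\, |W|=j} \dim_{\mathbb K} \widetilde H_{j-i-1}(\Delta_W;{\mathbb K}),
\]
noting that for $\Delta = \mathrm{Cl}(G)$ the induced subcomplex $\Delta_W$ equals $\mathrm{Cl}(G[W])$. Since $\mathrm{Cl}(G[W])$ contains every singleton from $W$, it is nonempty whenever $W \neq \emptyset$, so $\widetilde H_{-1}(\Delta_W) = 0$ for every nonempty $W$.

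The heart of the argument is a case analysis on $i \in \{n-1, n\}$. For $i = n$ the constraint $j-i-1 \geq -1$ forces $j = n$, in which case the summand is $\widetilde H_{-1}(\Delta) = 0$. For $i = n-1$ the only candidates are $j = n-1$ (again giving $\widetilde H_{-1}$ of a nonempty complex, hence zero) and $j = n$, where we obtain
\[
\beta_{n-1,n}(R/I_{\overline G}) \;=\; \dim_{\mathbb K} \widetilde H_0\bigl(\mathrm{Cl}(G);{\mathbb K}\bigr).
\]
This last quantity is nonzero exactly when $\mathrm{Cl}(G)$ is disconnected, and since the $1$-skeleton of $\mathrm{Cl}(G)$ is $G$ itself, this is equivalent to $G$ being disconnected. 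Chaining the equivalences gives the lemma.

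The only real subtlety is bookkeeping with the degree shift in Hochster's formula and being careful with the $\widetilde H_{-1}$ convention; once those are settled the result falls out immediately. A more calculational alternative would be to apply a Reisner-style depth criterion expressing $\operatorname{depth}(R/I_\Delta) \geq k$ in terms of the vanishing of low-degree reduced homology of links in $\Delta$, but going through Hochster directly seems the cleanest route.
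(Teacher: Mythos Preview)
Your proof is correct and follows essentially the same route as the paper: identify $I_{\overline G}$ with the Stanley--Reisner ideal of the clique complex and apply Hochster's formula to see that the only possibly nonvanishing Betti number in the relevant range is governed by $\widetilde H_0$ of $\mathrm{Cl}(G)$. If anything, your case analysis on $(i,j)$ is more explicit than the paper's, which simply asserts that $\beta_{n-2,n}(I_{\overline G})$ is the only term to worry about; the shift to $R/I_{\overline G}$ and the mention of Auslander--Buchsbaum are cosmetic differences.
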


\begin{proof}
We first observe that $I_{\overline{G}}$ is the Stanley-Reisner ideal of the simplicial complex $\Delta(G)$, where $\Delta(G)$ is the clique complex of $G$ (the simplicial complex whose faces are complete subgraphs of $G$).  We then employ Hochster's formula (see for instance \cite{MilStu}), which describes the Betti numbers of a Stanley-Reisner ideal in terms of the homology of induced complexes:
\[\beta_{i,j} = \sum \dim_{\mathbb K} \tilde H_{j-i-2} (\Delta(S); {\mathbb K}),\]
\noindent
where the sum is over all $j$-subsets $S \subset [n] = V(G)$, and $\Delta(S)$ denotes the clique complex of the graph induced on the vertex set $S$. 

Recall that the projective dimension of $I_{\overline{G}}$ is the largest $i$ such that $\beta_{i,j} \neq 0$ for some $j$. If $G$ is disconnected then we have $\tilde H_0 (\Delta(G),{\mathbb K}) \neq 0$, so that $\beta_{n-2 ,n} \neq 0$ and hence $\pdim(I_{\overline G}) \geq n-2$.  On the other hand if $\pdim(I_{\overline{G}}) \geq n-2$ then by Hochster's formula we must have $\beta_{n-2,n} \neq 0$ so that $\tilde H_0 (\Delta(G)) \neq 0$, which implies that $G$ is disconnected.
\end{proof}

For general $d$-clutters we have an an analogous statement.  We begin with a definition.

\begin{defn}
Suppose ${\mathcal C}$ is a $d$-clutter with complement circuit ideal $I_{\overline{\mathcal C}}$.  Suppose $e \in {\mathcal C}$ has the property that ${\bf x}_e$ is a linear divisor for $I_{\overline{\mathcal C}}$, and let $I_e = \langle I_{\overline{\mathcal C}} \cup \{{\bf x}_e\} \rangle$.  Define the \emph{Betti contribution} of ${\bf x}_e$ to be the set
\[\{i \in {\mathbb N}: \beta_{i} (I_{\overline{\mathcal C}}) \neq \beta_{i} (I_e)\}.\]
\noindent
From Equation \ref{eq:betti} we have that the Betti contribution has the form $\{0,1, \dots, k\}$ for some integer $k$.  We say that the Betti contribution is \emph{small} if $k < n-d$.
\end{defn}

 \begin{prop}\label{prop:small}
 	Suppose ${\mathcal C}$ is a $d$-clutter and $e \in {\mathcal C}$ is an exposed circuit.  Then $e$ is properly exposed if and only if the Betti contribution of ${\bf x}_e$ is small.
	 \end{prop}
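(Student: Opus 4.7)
The plan is to combine Theorem \ref{thm:main}, which identifies the colon ideal $(I_{\overline{\mathcal{C}}}:\mathbf{x}_e)$ precisely, with the mapping cone formula of Equation (\ref{eq:betti}), which turns the number of generators of that colon ideal directly into Betti-number data. The whole proof reduces to reading off the correct value of $\ell$, the number of generators of the colon ideal, and then translating the smallness condition into a condition on $|K|$.

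Since $e$ is exposed, Theorem \ref{thm:main} tells us that $(I_{\overline{\mathcal{C}}}:\mathbf{x}_e)$ is generated precisely by the variables $\{x_i : i \in [n]\setminus K\}$, where $K$ is the unique maximal clique of $\mathcal{C}$ containing $e$. Thus $\ell = n - |K|$. Because these $\ell$ generators are variables, $R/(I_{\overline{\mathcal{C}}}:\mathbf{x}_e)$ is minimally resolved by the Koszul complex $\mathcal{K}_\ell$, and the mapping cone construction behind Equation (\ref{eq:betti}) yields $\beta_i(I_e) = \beta_i(I_{\overline{\mathcal{C}}}) + \binom{\ell}{i}$ for every $i \geq 0$. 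Since $\binom{\ell}{i}$ is positive precisely for $0 \leq i \leq \ell$, the Betti contribution of $\mathbf{x}_e$ is exactly $\{0, 1, \dots, n - |K|\}$; in the notation of the definition, $k = n - |K|$.

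To finish, observe that $e$ is properly exposed iff $|K| > d$, which is equivalent to $n - |K| < n - d$, i.e., to $k < n - d$. This is precisely the smallness condition, giving the desired biconditional. The only technicality I expect to verify carefully is the minimality of the mapping cone at this single step, so that the formula from Equation (\ref{eq:betti}) holds as an equality rather than merely an upper bound on $\beta_i(I_e)$. This is standard in this setting because the colon ideal generators are distinct variables forming a regular sequence, and since $\mathbf{x}_e \notin I_{\overline{\mathcal{C}}}$ no unit entries can appear in the comparison map $\mathcal{K}_\ell \to \mathcal{F}$; a short check using the short exact sequence $0 \to R/(I_{\overline{\mathcal{C}}}:\mathbf{x}_e) \to R/I_{\overline{\mathcal{C}}} \to R/I_e \to 0$ and the fact that $\deg(\mathbf{x}_e) = d \geq 1$ confirms this.
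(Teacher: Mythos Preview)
Your proof is correct and follows essentially the same approach as the paper: both invoke Theorem~\ref{thm:main} to identify the colon ideal as $\langle x_i : i \in [n]\setminus K\rangle$, then use the mapping cone formula of Equation~(\ref{eq:betti}) to read off that the top nonzero Betti contribution occurs in degree $\ell = n - |K|$, and finally compare $|K|$ with $d$. If anything, your write-up is slightly more explicit about why the mapping cone is minimal, whereas the paper simply takes this for granted from the discussion in Section~\ref{sec:definitions}.
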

 
 \begin{proof}
Let $I_{\overline{\mathcal C}}$ denote the circuit ideal of the complement of ${\mathcal C}$, and let $I_e$ denote the ideal obtained from adding the generator ${\bf x}_e$.  Suppose the colon ideal $(I_{\overline{\mathcal C}} : {\bf x}_e)$ is given by $\langle x_{i_1}, x_{i_2}, \dots, x_{i_\ell} \rangle$. As discussed in Section \ref{sec:definitions}, a minimal resolution of $I_e$ is obtained by taking the mapping cone of $f:{\mathcal K}_{\ell} \rightarrow {\mathcal F}$, where ${\mathcal K}_\ell$ is a Koszul resolution on $\ell$ generators and ${\mathcal F}$ is a minimal resolution of $I_{\overline{\mathcal C}}$. From Equation \ref{eq:betti} we see that the largest element in the Betti contribution of ${\bf x}_e$ is $\ell$. 

Suppose the edge $e$ is uniquely contained in the maximal clique $K$. From Theorem \ref{thm:main} we have that $x_{i_j} \in (I_{\overline{\mathcal C}} : {\bf x}_e)$ if only if the vertex $i_j$ satisfies $i_j \notin K$.  If $e$ is itself a maximal clique of the $d$-clutter ${\mathcal C}$ then we have $n-d$ vertices in the complement and hence by Equation \ref{eq:betti} we have that the Betti contribution of ${\bf x}_e$ has value $n-d$.  On the other hand if $|K| > d$ (so that $e$ is strictly contained in $K$) then we have at most $n-d-1$ vertices in the complement, in which case the Betti contribution is small.
\end{proof}
 

From this we get the desired analogue of Corollary \ref{cor:erasures} in the setting of $d$-clutters.  Once again recall that a circuit erasure is the removal of a circuit that is \emph{properly} exposed.

 \begin{prop}\label{prop:cluttererasure}
 	A $d$-clutter ${\mathcal C}$ can be obtained from a complete $d$ clutter $K_n^d$ through a sequence of circuit erasures if and only if $I_{\overline{\mathcal C}}$ has linear quotients and 
 	\[\pdim(I_{\overline {\mathcal C}}) < n-d.\]
 \end{prop}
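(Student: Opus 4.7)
The plan is to leverage the two technical results already established, Theorem \ref{thm:main} (exposed circuits correspond to linear divisors in the complement ideal) and Proposition \ref{prop:small} (properly exposed circuits correspond to small Betti contributions), by bridging them with the mapping cone formula of Equation \ref{eq:betti}. The crucial observation that formula yields is that when a generator whose colon ideal has $\ell$ variables is appended, the Betti number $\beta_i$ grows by exactly $\binom{\ell}{i}$ without any cancellation, so Betti numbers are monotone non-decreasing along any linear quotient ordering built from the zero ideal. Consequently, if $k_j$ denotes the top of the Betti contribution when the $j$-th generator $m_j$ is added, then $\pdim(I_{\overline{\mathcal{C}}}) = \max_j k_j$.

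With this identification in hand, the forward direction proceeds as follows. Given an erasure sequence $K_n^d \supset \mathcal{C}_1 \supset \cdots \supset \mathcal{C}_s = \mathcal{C}$, read the erased circuits in reverse order and let $m_1, \dots, m_s$ be the corresponding monomials, which together generate $I_{\overline{\mathcal{C}}}$. Theorem \ref{thm:main} tells us each $m_j$ is a linear divisor for the ideal $\langle m_1, \dots, m_{j-1} \rangle$ (since each erased circuit was exposed in the corresponding clutter), so this is a linear quotient ordering. Since each erasure removed a \emph{properly} exposed circuit, Proposition \ref{prop:small} ensures $k_j < n-d$ for every $j$, and the identification above yields $\pdim(I_{\overline{\mathcal{C}}}) < n-d$.

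For the converse, I would take any linear quotient ordering $m_1, \dots, m_s$ of $I_{\overline{\mathcal{C}}}$ and, reading it in reverse, produce a candidate sequence of exposed-circuit removals from $K_n^d$ ending at $\mathcal{C}$ (this is direct from Theorem \ref{thm:main} applied at each stage). The remaining task is to upgrade each such removal to a genuine erasure, i.e.\ to verify each circuit is not merely exposed but properly exposed. If some $m_j$ corresponded to an exposed circuit that failed to be properly exposed, Proposition \ref{prop:small} would force $k_j = n-d$, whence $\pdim(I_{\overline{\mathcal{C}}}) = \max_i k_i \geq n-d$, contradicting the hypothesis. Thus every step in the reversed sequence is a bona fide erasure.

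The one delicate point will be justifying the monotonicity claim $\pdim(I_{\overline{\mathcal{C}}}) = \max_j k_j$ carefully. This reduces to observing that the iterated mapping cones of Herzog-Takayama produce \emph{minimal} free resolutions when the ideal has linear quotients (so no cancellation occurs at any stage of the cone construction); given this, Equation \ref{eq:betti} immediately yields additivity $\beta_i(I_{\overline{\mathcal{C}}}) = \sum_j \binom{\ell_j}{i}$, and the claim follows. Once that is in place, both implications are clean consequences of Theorem \ref{thm:main} and Proposition \ref{prop:small}.
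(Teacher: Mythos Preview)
Your approach is correct and matches what the paper intends; the paper itself states the proposition as an immediate consequence of Proposition \ref{prop:small} without writing out further details, and your argument fills in exactly the reasoning one would supply.

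One small correction: the correspondence between erasure sequences and linear quotient orderings runs in the \emph{same} direction, not reversed. If $e_1, e_2, \ldots, e_s$ is the sequence of erased circuits (so that $e_j$ is exposed in the clutter $K_n^d \setminus \{e_1, \ldots, e_{j-1}\}$), then Theorem \ref{thm:main} says ${\bf x}_{e_j}$ is a linear divisor for the complement ideal $I_{\overline{K_n^d \setminus \{e_1, \ldots, e_{j-1}\}}} = \langle {\bf x}_{e_1}, \ldots, {\bf x}_{e_{j-1}} \rangle$. Hence taking $m_j = {\bf x}_{e_j}$ in the forward order already gives the linear quotient ordering; no reversal is needed. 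The same remark applies in the converse direction. With this adjustment, your identification $\pdim(I_{\overline{\mathcal C}}) = \max_j k_j$ via minimality of the Herzog--Takayama mapping cone resolution, together with the two invocations of Theorem \ref{thm:main} and Proposition \ref{prop:small}, completes the proof cleanly.
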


If ${\mathcal C}$ is a $d$-clutter obtained by removing exposed circuits from $K_n^d$, we see from the proof of Proposition \ref{prop:small}  that $\beta_{n-d+1}$ counts the number of circuits that were not properly exposed in the removal process.   The other Betti numbers are similarly controlled by the cardinalities of cliques in the removal process, as the next result spells out (see also \cite[Corollary 8.2.2]{HerHib} for a similar observation in the purely algebraic setting).

\begin{cor}\label{cor:Betti}
Suppose ${\mathcal C}$ is a $d$-clutter on vertex set $[n]$ obtained from $K_n^d$ by removing a sequence of exposed circuits $(e_1, e_2, \dots, e_r)$.  Let $(K_n^d = {\mathcal C}_1, {\mathcal C}_2, \dots, {\mathcal C}_{k+1} = {\mathcal C})$ be the sequence of $d$-clutters obtained in this process, so that $e_j$ is exposed in ${\mathcal C}_j$. By definition each $e_j$ is contained in a unique maximal clique $K_j \subset {\mathcal C}_j$, let $k_j = n-|K_j|$. Then the Betti numbers of $I_{\overline {\mathcal C}}$ are given by 
\[\beta_i(I_{\overline {\mathcal C}}) = \sum_{j=1}^r {k_j \choose i}.\]
\end{cor}

\begin{proof}
From Theorem \ref{thm:main} we have that each time we add the generator ${\bf x}_{e_j}$ we glue on a Koszul resolution on $k_j$ generators to the desired minimal resolution.  The result then follows from Equation \ref{eq:betti}.
\end{proof}

\begin{rem}
The multiset $\{ k_j \}_{j=1}^n$ described in Corollary \ref{cor:Betti} is an invariant of the $d$-clutter ${\mathcal C}$, and in fact can be seen to coincide with the $h$-vector of a certain simplicial complex obtained from ${\mathcal C}$.  Namely, let ${\mathcal S}({\mathcal C})$ denote the simplicial complex whose facets are given by $\{F_1, F_2, \dots, F_k\}$, where $F_j = [n] - e_j$. From Proposition \ref{prop:lin} we see that ${\mathcal S}({\mathcal C})$ is a shellable simplicial complex, with a shelling order $F_1, F_2, \dots, F_k$ corresponding to the order of removing exposed circuits.  For each $j$ the restricted set ${\mathcal R}(F_j)$ has the property that $|{\mathcal R}(F_j)| = k_j$ (see the definition and discussion in the next section). Hence from \cite[Proposition 2.3]{Sta} we have that
\[h_i({\mathcal S}({\mathcal C})) = |\{j: k_j = i\}|.\]
\end{rem}

Our result is similar in spirit to a formula for the chromatic polynomial of a chordal graph obtained from its description as a sequence of simplicial vertices.  To recall this connection suppose $(v_1, v_2, \dots, v_n)$ is an ordering of the vertices of $G$ with the property that $N_i(v_i)$ is a complete graph, where $N_i(v_i)$ denotes the neighborhood of $v_i$ in the subgraph of $G$ induced by vertex set $\{v_i, v_{i+1}, \dots, v_n\}$.  For each $i$ let $d_i$ denote the number $|N_i(v_i)| - 1$. Then one can show (see for instance \cite[Remark 2.5]{Agn}) that the chromatic polynomial of $G$ is given by
\[\chi_t(G) = \prod_{i=1}^n (t-d_i).\]
\noindent
We do not know if there are other combinatorial interpretations of the $k_j$.

We note that there is a geometric interpretation of the Betti numbers of (certain) ideals with linear quotients in terms of the face numbers of certain polyhedral complexes supporting a cellular resolution.  We refer to \cite{DocMoh} for details but note that in our running example (Example \ref{ex:chordal} from above) a minimal resolution of $I_{\overline G}$ 
\[0 \leftarrow I_{\overline{G}} \leftarrow R^5 \leftarrow R^6 \leftarrow R^2 \leftarrow 0.\]

\noindent
is supported on the polyhedral complex depicted below.   The complex has five vertices, six edges, and two 2-cells.  Also note that $\pdim(I_{\overline G}) = 2 < 5-2.$

\begin{figure}[h]
\includegraphics[scale = 1.05]{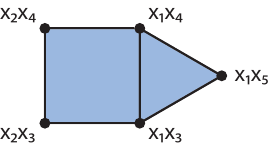}
\caption{A cellular complex supporting a minimal resolution of $I_{\overline G}$, where $G$ is the graph from Figure \ref{fig:example}.}
\end{figure}

We discuss one more higher-dimensional example to illustrate our constructions.

 \begin{example}Let $K^3_5$ denote the complete 3-clutter on 5 vertices.  We will remove circuits $K^3_5$ in the following order.  Here we suppress set brackets, so that $125 = \{1,2,5\}$.
 
 \begin{enumerate}
 	\item
 	Remove the circuit $125$, uniquely contained in the clique $12345$ and giving $k_1 = 0$.
 	\item
 	Remove $135$, uniquely contained in the clique $1345$ so that $k_2 = 1$ .
 	\item
 	Remove $145$, itself a clique and giving $k_3 = 2$.
	
	\end{enumerate}
 \noindent
 At this point we have a 3-clutter ${\mathcal C}$ that is geometrically a bipyramid over a triangle.  The corresponding complement circuit ideal is
 \[I_3 = \langle x_1x_2x_5, x_1x_3x_5, x_1x_4x_5 \rangle,\]
 which indeed has a linear resolution (see below).   From Corollary \ref{cor:Betti} we can compute Betti numbers
 \begin{equation}
 \begin{split}
 \beta_0 = {0 \choose 0} + {1 \choose 0} + {2 \choose 0} = 3 \\
 \beta_1 = {0 \choose 1} + {1 \choose 1} + {2 \choose 1} = 3 \\
 \beta_2 = {0 \choose 2} + {1 \choose 2} + {2 \choose 2} = 1.
\end{split}
\end{equation}
 
  \begin{figure}[h]
 	\includegraphics[scale = .35]{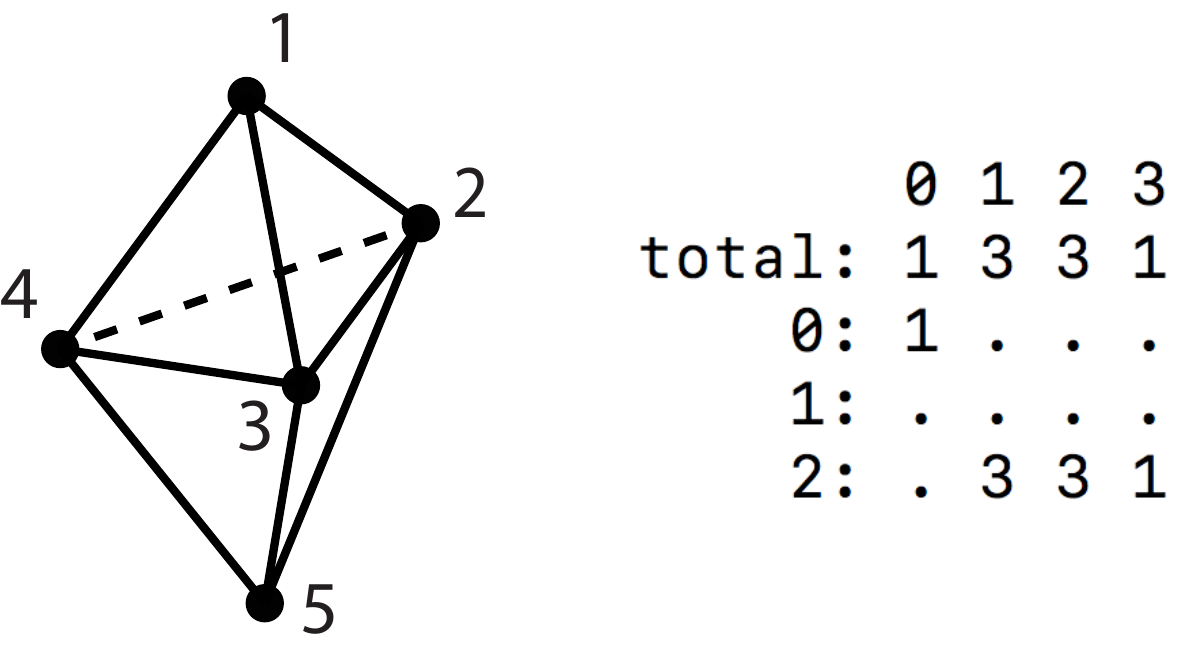}
 	\caption{A chordal 3-clutter ${\mathcal C}$, with the Betti table of $R/I_{\overline {\mathcal C}}$.}
 \end{figure}
 \noindent
 Note that $234$ is (properly) contained in cliques $1234$ \emph{and} $2345$.  Indeed if we remove $234$ we get
 \[I_4 =  \langle x_1x_2x_5, x_1x_3x_5, x_1x_4x_5 , x_2x_3x_4 \rangle,\]
 which has a nonlinear resolution.

 \begin{figure}[h]
 \includegraphics[scale = .39]{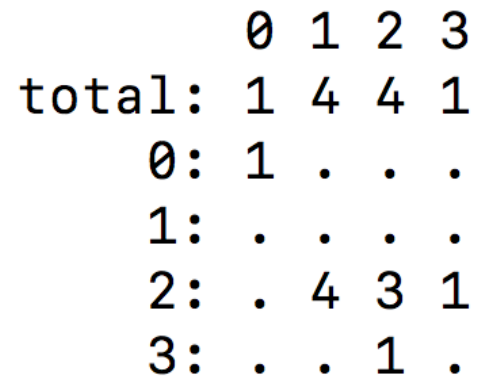}
 \caption{Betti table for $R/I_4$.}
 \end{figure}

Also note that in Step (3) we removed a circuit that was exposed but not properly exposed.  This is reflected by the fact that the corresponding ideal has projective dimension 2.  If in Step (3) we instead remove $123$ (which is uniquely contained in the clique $1234$) we obtain the `connected' 3-clutter depicted below. This clutter has the property that if we include the complete 1-skeleton the resulting simplicial complex has vanishing first homology.
 
   \begin{figure}[h] \label{fig:connclutter}
 	\includegraphics[scale = .35]{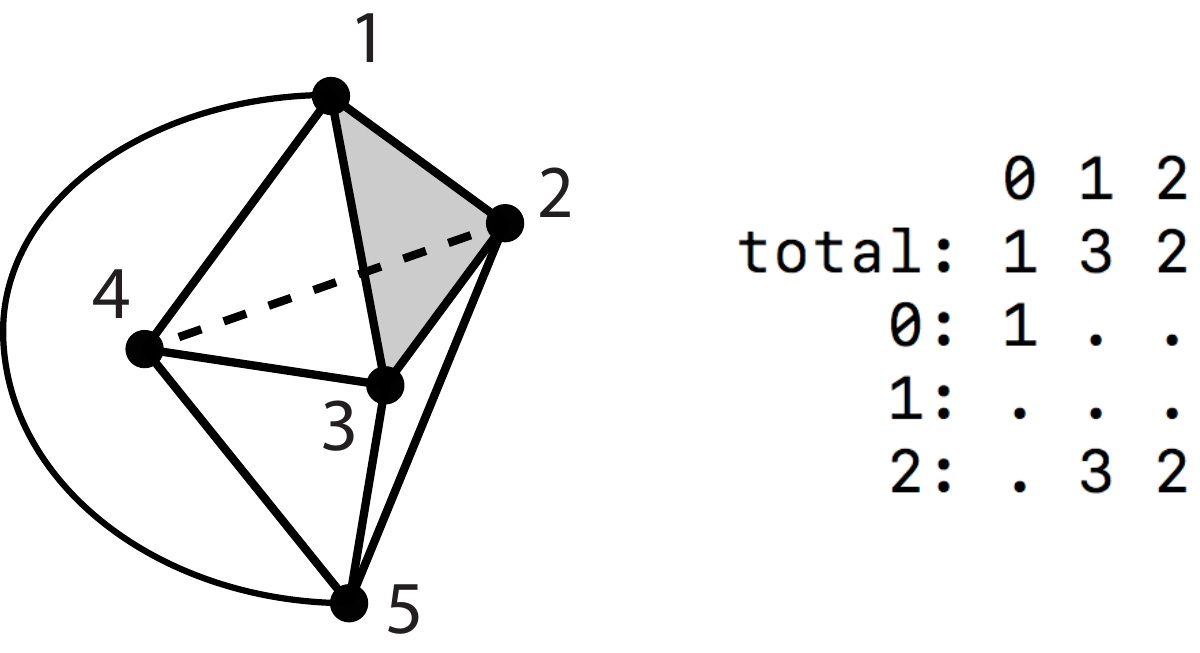}
 	\caption{A `connected' chordal 3-clutter ${\mathcal D}$, with the Betti table for $R/{\overline {\mathcal D}}$. The clutter consists of all $3$-subsets of $[5]$ except $123$, $125$, and $135$.}
	\end{figure}

  \end{example}
  
\begin{rem}
 The concept of an exposed circuits is reminiscent of certain constructions from the study of \emph{simple homotopy theory} (see for example \cite{Coh}).  Here if $\Delta$ is a simplicial complex, a face $\tau \in \Delta$ is called a \emph{free face} if it is contained in a unique facet $\sigma$.   The removal of $\tau$ along with all simplices $\gamma$ such that $\tau \subset \gamma \subset \sigma$ is called an \emph{elementary collapse}.  Since elementary collapses preserve (simple) homotopy type, in particular any such complex obtained this way from a simplex will be contractible.

If ${\mathcal C}$ is a $d$-clutter on $[n]$ we define the \defi{$d$-clique clutter} $\Delta_d({\mathcal C})$ to be the simplicial complex on the same vertex set $[n]$ with 

\begin{itemize}
\item a complete $(d-2)$-skeleton,
\item $(d-1)$-dimensional faces corresponding to the circuits of ${\mathcal C}$,
\item for $k \geq d$, all $k$-faces $\sigma$ such that all $d$-subsets of $\sigma$ are circuits in ${\mathcal C}$.  
\end{itemize}

For example if $d=2$ this is the usual clique complex of a graph. One can see that a properly exposed circuit $e$ in a $d$-clutter ${\mathcal C}$ corresponds to a free face (of dimension $d-1$) in the simplicial complex $\Delta_d({\mathcal C})$. Hence if ${\mathcal C}$ is a clutter obtained by performing a sequence of circuit erasures starting with $K_n^d$ we see that $\Delta_d({\mathcal C})$ is contractible.  In the graph case ($d=2$) this is equivalent to the fact that removing an exposed edge disconnects the graph if and only if it is not properly exposed, as in the proof of \ref{cor:erasures} (recall that the clique complex of a chordal graph is contractible if and only if it is connected).  We note that a further connection between chordality and simple homotopy theory (in the context of \emph{$d$-collapsibility}) has been recently explored by Bigdeli and Faridi \cite{BigFar}.  The authors of \cite{BigFar} use the language of Stanley-Reisner theory to generalize results from \cite{BPZ} to include the case of square-free monomial ideals that are not necessarily generated in a fixed degree.
\end{rem}

\section{Applications: Simon's conjecture and higher chordality} \label{sec:applications}

We next discuss other applications and corollaries of our results.   We also relate our study to other constructions of chordal complexes from the literature.

\subsection{Extendably shellable complexes} \label{sec:extend}
From Theorem \ref{thm:main} and Proposition \ref{prop:lin} we see that the process of removing exposed circuits from a $d$-clutter on vertex set $[n]$ is closely related to \emph{shellings} of (pure) simplicial complexes of dimension $n-d-1$.  We now discuss how our results from above can be applied in this context.   We begin with a definition.

\begin{defn}
A shellable complex $\Delta$ is said to be \defi{extendably shellable} if any shelling of a subcomplex of $\Delta$ can be extended to a shelling of $\Delta$.  
\end{defn}

Here a subcomplex of $\Delta$ is a simplicial complex $\Gamma$ on the same vertex set as $\Delta$, whose set of facets consists of a subset of the facets of $\Delta$. Ziegler \cite{Zie} has shown that there exist simple and simplicial polytopes whose boundary complexes are not extendably shellable. Simon \cite{Sim} has conjectured that every $k$-skeleton of a simplex is extendably shellable.   Our interpretation of results of \cite{CulGurSti} provides a proof of the conjecture in some special cases, and also leads to generalizations.  

Recall that if $\Delta$ is a pure $k$-dimensional simplicial complex $\Delta$ with shelling order of its facets $(F_1, F_2, \dots, F_{f_k})$, the \defi{restricted set} of the facet $F_i$, denoted ${\mathcal R}(F_i)$, is the unique minimal element of $\Delta_i \backslash \Delta_{i-1}$, where $\Delta_i$ is the subcomplex of $\Delta$ generated by $F_1, \dots, F_i$.  In other words we have 
\[{\mathcal R}(F_i) = \{\ell \in F_i: F_i \backslash \{\ell\} \in \Delta_{i-1}\}.\]

If $\Delta$ is a shellable complex with shelling order $F_1, F_2, \dots, F_r$ then from Proposition \ref{prop:lin} we have that the ideal $I_{\Delta^\vee} = \langle u_1, u_2, \dots, u_r \rangle$ has linear quotients with the given ordering of the generators, where $u_i = {\bf x}_{\overline{F_i}}$.  We have from \cite[Lemma 8.2.3]{HerHib} that the ideal $(u_1, \dots,u_{i-1}:u_i)$ is generated by the monomials $u_j/\gcd(u_j,u_i), j = 1, \dots, i-1$, and furthermore that $u_j/\gcd(u_j,u_i) = x_\ell$ for some $\ell \in [n]$. It follows that for all $i \in [n]$, the set ${\mathcal R}(F_i)$ is given by the (index set of the) variables generating the colon ideal $(I_{i-1}:u_i)$, where $I_{i-1} = \langle u_1, \dots,u_{i-1} \rangle$.

\begin{lemma}\label{lem:shelling}
A  sequence $e_1, e_2, \dots, e_k$ of removing exposed circuits from the complete $d$-clutter $K_n^d$ corresponds to a shelling sequence $F_1, F_2, \dots, F_k$ of the $(n-d-1)$-dimensional complex on vertex set $[n]$ whose facets are given $F_i = [n] \backslash e_i$.  A circuit $e_i$ is \emph{properly} exposed if and only if the restricted set of $F_i$ consists of less than $n-d$ elements.
\end{lemma}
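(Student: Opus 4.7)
The plan is to combine Corollary \ref{cor:graph}, which identifies exposed edge erasures with linear divisors, with Proposition \ref{prop:lin}, the Alexander-dual correspondence between linear quotients and shellings, and then to match the restricted set of each facet with the variables generating an appropriate colon ideal.

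For the first statement, set $F_i = [n] \setminus e_i$, so each $F_i$ is an $(n-3)$-face with ${\bf x}_{\overline{F_i}} = {\bf x}_{e_i}$. After the prescribed erasures, the complement of the resulting graph $G$ has edge set $\{e_1,\dots,e_k\}$, so $I_{\overline{G}} = \langle {\bf x}_{e_1}, \dots, {\bf x}_{e_k}\rangle = \langle {\bf x}_{\overline{F_1}}, \dots, {\bf x}_{\overline{F_k}}\rangle$. Writing $I_{j} := \langle {\bf x}_{e_1}, \dots, {\bf x}_{e_j}\rangle$, Corollary \ref{cor:graph} asserts that $e_i$ is exposed in the graph at step $i-1$ if and only if ${\bf x}_{e_i}$ is a linear divisor for $I_{i-1}$. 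Hence an exposed-edge erasure sequence is precisely a linear quotient ordering of $I_{\overline{G}}$, and by Proposition \ref{prop:lin} this is equivalent to $F_1,\dots,F_k$ being a shelling of the pure $(n-3)$-dimensional complex with those facets.

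For the second statement I would make the bijection between restricted faces of $F_i$ and minimal generators of $(I_{i-1}:{\bf x}_{e_i})$ explicit. A codimension-one face of $F_i$ has the form $F_i \setminus \{v\}$ with $v \notin e_i$, and it lies in an earlier facet $F_j$ precisely when $e_j \subseteq e_i \cup \{v\}$. Since $|e_j|=2$ and $e_j \neq e_i$, this forces $v \in e_j$ with $e_j \setminus \{v\}$ a vertex of $e_i$, which is exactly the condition for $x_v$ to be a minimal generator of the colon ideal $(I_{i-1}:{\bf x}_{e_i})$. By Theorem \ref{thm:main}, when $e_i$ is exposed this colon ideal equals $\langle x_v : v \in [n] \setminus K\rangle$, where $K$ is the unique maximal clique of the current graph containing $e_i$. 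The restricted set of $F_i$ therefore has cardinality $n - |K|$, and this is strictly less than $n-2$ if and only if $|K| > 2$, i.e., if and only if $e_i$ is properly exposed.

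I expect the main obstacle to be purely notational: one must verify carefully that the bijection above accounts for all minimal generators of the colon ideal being variables (guaranteed by Corollary \ref{cor:graph} under the exposed hypothesis) and that the matching between $v \notin e_i$ and codimension-one faces of $F_i$ lying in the earlier subcomplex is both injective and surjective. Once this bookkeeping is in place, both statements of the lemma follow immediately from Theorem \ref{thm:main} and Proposition \ref{prop:lin}.
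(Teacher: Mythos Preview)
Your proposal is correct and follows essentially the same route as the paper: invoke the linear-divisor characterization of exposed edges (the paper cites Theorem~\ref{thm:main} directly, you cite its graph specialization Corollary~\ref{cor:graph}), apply Proposition~\ref{prop:lin} to pass to a shelling, and then identify the restricted set of $F_i$ with the variable generators of $(I_{i-1}:{\bf x}_{e_i})$, which Theorem~\ref{thm:main} identifies with $[n]\setminus K$. The only difference is that you actually spell out the bijection between codimension-one faces $F_i\setminus\{v\}$ lying in an earlier facet and variables $x_v$ in the colon ideal, whereas the paper simply asserts this correspondence; your extra bookkeeping is sound and arguably makes the argument more self-contained.
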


\begin{proof}
Theorem \ref{thm:main} implies that each $e_i$ is an exposed circuit in $K_n^d - \{e_1, e_2, \dots, e_{i-1}\}$ if and only if ${\bf x}_{\bf e_i}$ is a linear divisor for the ideal $I_{i-1} = \langle {\bf x}_{e_1}, {\bf x}_{e_2}, \dots, {\bf x}_{e_{i-1}} \rangle$. Proposition \ref{prop:lin} then implies that $e_1, e_2, \dots, e_k$ is a sequence of exposed circuits if and only if $F_1, F_2, \dots, F_k$ is a shelling order for the simplicial complex it defines, where $F_i = \overline e_i = [n] \backslash e_i$.  As we have seen, the restricted set of $F_i$ corresponds to variables generating the colon ideal $(I_{i-1}: {\bf x}_{e_i})$, which by Theorem \ref{thm:main} is given by $[n] \backslash K$, where $K$ is the unique maximal clique containing the edge $e_i$.  Hence $|K| > d$ if and only if the restricted set of $F_i$ consists of less than $n-d$ elements.
\end{proof}

\begin{cor}\label{cor:ext}\cite{BPZDec}
The $(n-3)$-skeleton of a simplex on vertex set $[n]$ is extendably shellable.
\end{cor}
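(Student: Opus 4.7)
The plan is to use Lemma~\ref{lem:shelling} (which is bidirectional via Theorem~\ref{thm:main} and Proposition~\ref{prop:lin}) to translate the problem of extending a partial shelling of a subcomplex into the graph-theoretic problem of extending a sequence of exposed-edge removals in $K_n$, and then to show that \emph{any} chordal graph on $[n]$ admits a complete sequence of exposed-edge removals down to the empty graph.

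Given a pure $(n-3)$-dimensional subcomplex $\Gamma$ of the $(n-3)$-skeleton on $[n]$ with a shelling order $F_1, \dots, F_k$, Lemma~\ref{lem:shelling} produces a corresponding sequence of edges $e_i = [n] \setminus F_i$, each exposed in $K_n - \{e_1, \dots, e_{i-1}\}$. Let $G = K_n - \{e_1, \dots, e_k\}$. Then $G$ is chordal, since chordal graphs are precisely those obtainable from $K_n$ by a sequence of exposed-edge removals (from Theorem~\ref{thm:main} combined with the results of Herzog--Hibi--Zheng and Fr\"oberg recalled in the introduction).

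The key lemma I would prove is: \emph{every chordal graph $G$ on $[n]$ admits an ordering $f_1, \dots, f_m$ of its edges such that each $f_i$ is exposed in $G - \{f_1, \dots, f_{i-1}\}$.} To prove it, fix a perfect elimination ordering $v_1, \dots, v_n$ of $G$ and remove edges in $n$ rounds, where round $i$ consists of removing (in any order) all edges incident to $v_i$ that are still present. Let $H$ denote the graph at some stage during round $i$, so that only edges incident to $v_1, \dots, v_{i-1}$ or to $v_i$ have been removed. Then $N_H(v_i) \subseteq N_G(v_i) \cap \{v_{i+1}, \dots, v_n\}$, which is a clique in $G$ by the PEO property and remains a clique in $H$ since no edges within $\{v_{i+1}, \dots, v_n\}$ have been touched. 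Thus $N_H[v_i]$ is a clique; any vertex adjacent to all of it must lie in $N_H(v_i)$, so $N_H[v_i]$ is maximal; and any maximal clique of $H$ containing $v_i$ must be contained in $N_H[v_i]$. Hence $N_H[v_i]$ is the unique maximal clique containing any edge $v_i u$, so $v_i u$ is exposed in $H$.

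Applying this lemma to $G$ extends the initial partial sequence to a full sequence $e_1, \dots, e_N$ of exposed-edge removals emptying $K_n$, and Lemma~\ref{lem:shelling} translates this back to a shelling of the full $(n-3)$-skeleton extending $F_1, \dots, F_k$. The main obstacle is the key lemma, and in particular verifying that the simplicial structure at $v_i$ persists throughout round $i$; this is automatic because the edges removed within a single round all share the vertex $v_i$, so the clique structure on $\{v_{i+1}, \dots, v_n\}$ is preserved.
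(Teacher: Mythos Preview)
Your proof is correct and follows the same overall strategy as the paper: translate the partial shelling into a sequence of exposed-edge removals from $K_n$, observe that the resulting graph $G$ is chordal, and then show the sequence can be continued until every edge is removed.

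The difference lies in how that last step is handled. The paper simply cites \cite{CulGurSti} for the fact that any chordal graph contains an exposed edge, then inducts one edge at a time. You instead prove a self-contained lemma: using a perfect elimination ordering, you exhibit an explicit complete exposed-edge elimination order for any chordal graph. Your argument is clean and constructive---the observation that during round $i$ the closed neighborhood $N_H[v_i]$ remains the unique maximal clique containing $v_i$ (because the clique structure on $\{v_{i+1},\dots,v_n\}$ is untouched) is exactly what is needed. This buys independence from \cite{CulGurSti} and gives an algorithm; the paper's version is shorter but relies on that external reference for the crucial graph-theoretic fact.
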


\begin{proof}
Let $\Delta_{n^-1}^{(n-3)}$ denote the $(n-3)$-skeleton of the simplex on vertex set $[n]$. Suppose $H$ is a shellable proper subcomplex of $\Delta_{n-1}^{(n-3)}$, with shelling order $F_1, F_2, \dots, F_h$.  Note that each $F_i$ is a subset of $[n]$ of size $n-2$. Lemma \ref{lem:shelling} implies that the graph on vertex set $[n]$ with edges $e_i = [n] \backslash F_i$ is obtained from the complete graph $K_n$ by removing exposed edges; hence it is chordal.   In \cite[Proposition 10]{CulGurSti} it is shown that if $G$ is any chordal graph then $G$ contains an exposed edge $e$ (in fact if $G$ is not a forest then this edge can be taken to be \emph{properly} exposed).  Hence we can extend the shelling sequence with the facet $F_{h+1} = [n] \backslash e$. Induction on ${n \choose 2} - h$ implies that any shelling of a subcomplex of $\Delta_{n-1}^{(n-3)}$ can be extended to an entire shelling.
\end{proof}

Note that the $(n-2)$-skeleton of the simplex on vertex set $[n]$ is the boundary of a simplex, which is clearly extendably shellable (in fact any sequence of facets constitutes a shelling). The $(n-1)$-skeleton is the simplex itself which consists of a single facet.  Hence we have the following corollary.

\begin{cor}\label{cor:shell}
For all $k \geq n-3$, the $k$-skeleton of a simplex on $[n]$ is extendably shellable.
\end{cor}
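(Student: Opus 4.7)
The plan is to split into the three cases $k = n-3$, $k = n-2$, and $k = n-1$ and observe that only the first is substantive. The case $k = n-3$ is precisely Corollary \ref{cor:ext}, where the work of relating exposed-edge sequences to linear quotients (Theorem \ref{thm:main}) and invoking the existence of an exposed edge in any chordal graph has already been carried out. So nothing new is needed there.

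For $k = n-1$ the $(n-1)$-skeleton of the simplex on $[n]$ is the simplex itself, whose unique facet is $[n]$. Any subcomplex of the simplex with facets is either empty or equal to the whole simplex, so the extendable shellability property holds vacuously (or by appending the unique facet $[n]$ to the empty shelling).

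The remaining case $k = n-2$ concerns the boundary $\partial \Delta^{n-1}$, whose facets are the $n$ subsets $F_i = [n] \setminus \{v_i\}$. The plan is to record the classical fact that every ordering of these facets is already a shelling: if $j < i$ then $F_i \cap F_j = [n] \setminus \{v_i, v_j\}$ is a face of dimension $n-3$, so the subcomplex $F_i \cap (F_1 \cup \cdots \cup F_{i-1})$ is pure of dimension $n-3$ as required. Consequently any partial shelling of a subcomplex of $\partial \Delta^{n-1}$ extends by listing the remaining facets in any order. The only real obstacle in the statement is contained in Corollary \ref{cor:ext}, which is already proved; promoting it from $k = n-3$ to all $k \geq n-3$ requires nothing more than these two elementary observations, so I expect no genuine difficulty in writing up the proof.
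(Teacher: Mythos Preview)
Your proposal is correct and matches the paper's own argument essentially verbatim: the paper likewise invokes Corollary~\ref{cor:ext} for $k=n-3$, notes that the $(n-2)$-skeleton is the boundary of the simplex where any ordering of facets is a shelling, and observes that the $(n-1)$-skeleton is the full simplex with a single facet. Your write-up of the $k=n-2$ case is actually slightly more detailed than the paper's one-line remark, but the approach is identical.
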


This result was also obtained in \cite{BPZDec} using methods related to another notion of chordal clutters (see the next section).  A more careful analysis of the results from \cite{CulGurSti} leads to another class of $(n-3)$-dimensional simplicial complexes that are extendably shellable.

\begin{prop}
Suppose $\Delta$ is an $(n-3)$-dimensional shellable simplicial complex on vertex set $[n]$. If $\Delta$ is contractible and has ${n \choose 2} - n + 1$ facets then it is extendably shellable.
\end{prop}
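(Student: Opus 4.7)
The plan is to identify $\Delta$ with a spanning tree of $K_n$ and reduce extendable shellability to an iterative erasure argument on chordal graphs. Each facet $F$ of $\Delta$ has size $n-2$ and corresponds to the edge $e_F := [n]\setminus F$ of $K_n$; the facet count $\binom{n}{2}-n+1$ says that the "non-erased" edges form a graph $G$ with exactly $n-1$ edges. By Lemma~\ref{lem:shelling} and Corollary~\ref{cor:erasures}, shellability of $\Delta$ presents $G$ as the endpoint of an exposed-edge erasure of $K_n$, so $G$ is chordal; the contractibility hypothesis, via the Betti-contribution/$\pdim$ analysis that underlies Proposition~\ref{prop:cluttererasure} in the $d=2$ setting, forces every erasure in this sequence to be properly exposed and hence forces $G$ to be connected. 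Thus $G$ is a spanning tree of $K_n$.

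Given a subcomplex $H\subseteq\Delta$ with a shelling $F'_1,\dots,F'_h$, Lemma~\ref{lem:shelling} again turns the data into an erasure of $K_n$ ending at a chordal graph $G^*$ with $G\subseteq G^*\subseteq K_n$. Extending the shelling of $H$ to a shelling of $\Delta$ is then exactly the task of continuing this erasure, removing edges of $G^*\setminus G$ one at a time, each exposed in the current chordal graph, until we arrive at $G$. Since removing an exposed edge preserves chordality and keeps $G$ inside the current graph, the proposition reduces to the following \emph{Key Lemma}: for any chordal graph $G^*$ strictly containing a spanning tree $G$, there exists an edge of $G^*\setminus G$ that is exposed in $G^*$.

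I would prove the Key Lemma by induction on $n$ using a simplicial vertex $v$ of $G^*$. If $d_{G^*}(v)=1$, then the only edge at $v$ is a tree edge and we recurse on $(G^*-v,\,G-v)$; an exposed edge of $G^*-v$ not incident to $v$ remains exposed in $G^*$ since $v$ cannot extend any maximal clique of size $\ge 2$ not containing its unique neighbor. If $d_{G^*}(v)\ge 2$ with $N_G(v)\subsetneq N_{G^*}(v)$, any $u\in N_{G^*}(v)\setminus N_G(v)$ yields $vu\in G^*\setminus G$ in the unique maximal clique $N_{G^*}[v]$, hence exposed by Theorem~\ref{thm:main}. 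The delicate case -- and the main obstacle -- is when every simplicial vertex $v$ satisfies $|N_{G^*}(v)|\ge 2$ and $N_{G^*}(v)=N_G(v)$. Here each simplicial $v$ must be the unique exclusive vertex of its maximal clique $K=N_{G^*}[v]$, because two exclusive vertices in a common clique of size $\ge 3$ would (under the 2b assumption) place a triangle in $G$, which is a tree. The $G$-stars centered at these distinct simplicial vertices are pairwise edge-disjoint subtrees of $G$.

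A clique-tree analysis of $G^*$ is then intended to finish the argument: for two leaves $K,K'$ of $T_C$ with their exclusive simplicial vertices $v_K,v_{K'}$, the running intersection property forces $v_K$ and $v_{K'}$ to share their $G$-neighbors inside $K\cap K'$, and if that intersection has two or more elements one immediately gets a $4$-cycle in $G$. A small example shows that exposed edges of $G^*\setminus G$ can also occur inside an "internal" maximal clique of $T_C$ with no exclusive vertex (so the above leaf-argument alone is not sufficient to locate the exposed chord in every instance), so the cleanest approach is to iterate the leaf-clique reduction -- peeling off exclusive simplicial vertices one at a time until either a simplicial vertex of case (a) or (b) appears, or a contradictory four-cycle inside the tree $G$ is produced.
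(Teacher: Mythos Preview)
Your reduction is exactly the paper's: translate $\Delta$ to a spanning tree $G$ of $K_n$, translate a shelled subcomplex to a connected chordal graph $G^*\supseteq G$, and then observe that extending the shelling is the same as repeatedly finding an edge of $G^*\setminus G$ that is exposed in $G^*$.  Where you diverge is in the proof of the Key Lemma.  The paper does \emph{not} attempt a simplicial-vertex/clique-tree argument; instead it invokes the weighted minimum-spanning-tree result of \cite{CulGurSti}: choose edge weights so that $G$ is the unique MST of $G^*$, and then their Kruskal-type theorem guarantees that $G$ is reached from $G^*$ by a sequence of (properly) exposed erasures.  This packages the whole combinatorial difficulty into a single citation.

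Your direct attack on the Key Lemma is not complete.  Cases (a) and (b) are fine, but your treatment of case (c) is only a sketch, and the proposed ``iterate the leaf-clique reduction'' does not obviously go through: once you peel off a simplicial vertex $v$ with $d_G(v)\ge 2$, the graph $G-v$ is no longer a tree (it splits into $d_G(v)$ components), so you have lost the spanning-tree hypothesis needed to recurse.  You would have to replace $G$ by some new spanning tree of $G^*-v$ and argue that the eventual exposed edge you find still lies outside the \emph{original} $G$, and nothing in the sketch addresses this.  The clique-tree paragraph likewise stops short of a conclusion: you correctly note that the leaf-clique argument alone fails, but ``peel until case (a) or (b) appears or a $4$-cycle is produced'' is an outline, not a proof, and it is not clear why one of these outcomes must occur.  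If you want a self-contained combinatorial proof, you will need a genuinely new invariant to carry through the induction in case (c); otherwise, the paper's weight-assignment trick via \cite{CulGurSti} is the clean way to close the gap.
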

\begin{proof}
It is known (see for example \cite[Theorem 12.3]{ Koz}) that a shellable complex $\Delta$ is contractible if and only if any shelling of $\Delta$ there are no restricted sets of size $n-2$ (in fact $\Delta$ is either contractible or has the homotopy type of $\ell$ spheres of dimension $n-3$, where $\ell$ is the number of restricted sets of size $n-2$).  By Lemma \ref{lem:shelling} a sequence $e_1, e_2, \dots, e_k$ of removing properly exposed edges from $K_n$ corresponds to a shelling sequence $F_1, F_2, \dots, F_k$ of an $(n-3)$-dimensional complex, where at each step the restricted set consists of less than $n-2$ elements.  
 
Hence a contractible shellable complex with ${n \choose 2} - n +1$ facets corresponds to a connected graph with $n-1$ edges--in other words a tree $T$ on vertex set $[n]$.  Any shelling of a subcomplex $\Gamma$ of $\Delta$ corresponds to a connected chordal graph $G$ containing the tree $T$.  From \cite[Corollary 15]{CulGurSti} we see that if $H$ is an edge-weighted connected chordal graph then a minimal spanning tree of $H$ can be obtained by a sequence of edge erasures (removing \emph{properly} exposed edges).  Hence if we assign weighs to the edges of our graph $G$ so that $T$ is the only minimal spanning tree of $G$, we can obtain $T$ from $G$ via such a sequence. This implies that we can extend the shelling of $\Gamma$ to a shelling of $\Delta$, as desired.
\end{proof}

\begin{rem}
The method of finding a minimal spanning tree from \cite{CulGurSti} (and discussed above) is a variation of Kruskal's algorithm \cite{Kru}.  We have used this to show that any spanning tree of a connected chordal graph $G$ can be obtained via a sequence of deleting properly exposed edges.  One wonders if similar arguments can be employed to show that \emph{any} chordal subgraph $H$ of $G$ can be obtained from $G$ via sequence of removing exposed edges.  If true this would imply that \emph{any} $(n-3)$-dimensional shellable complex on vertex set $[n]$ is extendably shellable \footnote{This fact has since been established by the author and others, see \cite{CDGS}.}.
\end{rem}

Finally we end this section with a reformulation of Simon's conjecture in terms of exposed circuits.

  \begin{conj} [reformulation of Simon's conjecture]
  Suppose ${\mathcal C}$ is a $d$-clutter obtained from the complete $d$-clutter $K^d_n$ by a sequence of removing exposed circuits.  Then ${\mathcal C}$ contains an exposed circuit.
  \end{conj}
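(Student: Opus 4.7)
The plan is first to translate the conjecture through the dictionary developed in the paper, and then to attempt an induction mirroring the proof of Corollary \ref{cor:ext}. I would begin by verifying the natural generalization of Lemma \ref{lem:shelling} to arbitrary $d$: a sequence $(e_1, \dots, e_r)$ of exposed-circuit removals from $K_n^d$ corresponds bijectively to a shelling $(F_1, \dots, F_r)$ of a pure $(n-d-1)$-dimensional subcomplex of $\Delta_n^{(n-d-1)}$, via $F_i = [n] \setminus e_i$. The forward direction combines Theorem \ref{thm:main} with Proposition \ref{prop:lin}, and the reverse direction is the same argument read backwards. Under this dictionary, the conjecture becomes: every shellable proper subcomplex of $\Delta_n^{(n-d-1)}$ admits another facet extending the shelling---that is, exactly Simon's conjecture.

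Next I would attempt an induction on $n - d$, with base cases $d \geq n-3$ handled by Corollary \ref{cor:shell} and $d=2$ handled by the chordal graph case of \cite{CulGurSti}. Given such a $\mathcal{C}$ and $I = I_{\overline{\mathcal{C}}}$ (which has linear quotients by Proposition \ref{prop:cluttererasure}), I would search for a candidate exposed circuit by passing to a vertex link. Specifically, pick a vertex $v \in [n]$ maximizing the size of a maximal $d$-clique of $\mathcal{C}$ containing $v$, form the link clutter $\mathcal{C}_v = \{e \setminus \{v\} : v \in e \in \mathcal{C}\}$ (a $(d-1)$-clutter on $[n] \setminus \{v\}$), and attempt to verify that $\mathcal{C}_v$ is itself obtained from $K_{n-1}^{d-1}$ by a sequence of exposed-circuit erasures. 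If so, induction would yield an exposed circuit $e' \in \mathcal{C}_v$ contained in a unique maximal clique $K' \subset [n]\setminus\{v\}$, and one would hope to lift $e'$ to an exposed circuit $e = e' \cup \{v\} \in \mathcal{C}$ by showing that $K' \cup \{v\}$ is the unique maximal clique of $\mathcal{C}$ containing~$e$. In parallel, one would translate the question through Theorem \ref{thm:main} into the purely algebraic form: show that any squarefree monomial ideal $I$ with linear quotients, generated in degree $d$ and strictly contained in the ideal of all such monomials, admits a squarefree degree-$d$ linear divisor ${\bf x}_e \notin I$.

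The main obstacle is exactly what makes Simon's conjecture notoriously difficult: the link operation does \emph{not} in general interact cleanly with exposed-circuit erasure. There is no a priori reason that $\mathcal{C}_v$ should arise from $K_{n-1}^{d-1}$ by such a sequence, and even if it does, an exposed circuit in the link may fail to lift because the unique maximal clique containing $e' \cup \{v\}$ in $\mathcal{C}$ could be strictly larger than $K' \cup \{v\}$, or $e' \cup \{v\}$ could be contained in several maximal cliques of $\mathcal{C}$. The graph case $d=2$ succeeds precisely because a chordal graph admits a perfect elimination ordering, and no structural analogue of such an ordering is currently available for the higher-dimensional chordal clutters considered here. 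A resolution of this obstruction in any of its guises---combinatorial, homological, or via the algebraic reformulation above---would settle Simon's conjecture in the outstanding range $2 \leq d \leq n-4$.
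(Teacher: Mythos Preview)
The statement you are attempting to prove is labeled in the paper as a \emph{conjecture}, not a theorem; the paper offers no proof and does not claim one. Your first paragraph---translating the question via Theorem~\ref{thm:main} and Proposition~\ref{prop:lin} into the assertion that every shellable proper subcomplex of $\Delta_n^{(n-d-1)}$ extends by a further facet---is exactly the content of the word ``reformulation'' in the label: the paper is recording that this clutter-theoretic statement is \emph{equivalent} to Simon's conjecture, not that it has been resolved. So on that front your analysis is correct and matches the paper's intent precisely.

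Your second and third paragraphs are an honest account of why the natural inductive/link approach stalls, and you correctly diagnose the obstruction: links of clutters obtained by exposed-circuit erasure need not themselves arise this way, and exposed circuits in a link need not lift. This is not a gap in your argument so much as a confirmation that the problem is genuinely open in the range $3 \leq d \leq n-3$. There is nothing to compare against in the paper beyond the observation that the $d=2$ and $d \geq n-2$ cases are settled (via \cite{CulGurSti} and Corollary~\ref{cor:shell} respectively), which you already cite. In short: your proposal is not a proof, but neither does the paper contain one; you have correctly reconstructed both the reformulation and the reason it remains conjectural.
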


\subsection{Chordal complexes in higher dimensions}\label{sec:higher}

As we have seen the process of removing exposed edges from a complete $d$-clutter gives rise to a circuit ideal $I_{\overline{\mathcal C}}$ that has linear quotients.  For the case $d=2$ this in fact characterizes chordal graphs.  Hence our constructions give rise to a natural candidate for what might be considered a `chordal $d$-clutter' (or more precisely the complement of one). In recent years several authors have studied various higher-dimensional generalizations of chordal graphs, many inspired by Fr\"oberg's Theorem in an attempt to give a combinatorial characterization of squarefree monomial ideals having a $d$-linear resolution over any field.  By Hochster's formula this is equivalent to restricting the topology of induced subcomplexes, although one hopes for a more global description.  For the reader's convenience we briefly review some of these approaches below.

In one attempt to define a chordal complex, the notion of a `chordless cycle' is generalized to higher-dimensional settings.  This is the approach taken by Connon and Faridi in \cite{ConFar}, in which they give a combinatorial description of a $(d-1)$-dimensional cycle as a $d$-clutter ${\mathcal C}$ that is strongly connected and such that the `degree' of each ridge is even.  The authors introduce notions of `chordless' cycles and for instance show that if ${\mathcal C}$ is a $d$-clutter such that $I_{\overline{\mathcal C}}$ admits a linear resolution over any field, then ${\mathcal C}$ is `orientably-cycle-complete'.  As a partial converse, they show that the clutter ideal of the complement of a \emph{$d$-tree} (a $d$-clutter with no cycles) has a linear resolution over any field of characteristic 2.  In \cite{AdiNevSam} a more homological approach is taken to study notions of higher chordality.

In other attempts to generalize chordal graphs the notion of a `simplicial vertex' is taken as the starting point.  This is the approach taken by Emtander in \cite{Emt} where the notion of a vertex $v \in {\mathcal C}$ with a \emph{complete-neighborhood} is introduced.  A $d$-uniform clutter is then `chordal' in this context if every induced subclutter admits a vertex with a complete neighborhood (or else has no circuits).  Woodroofe \cite{Woo} takes a related but independent approach, defining a notion of a \emph{simplicial vertex} $v \in {\mathcal C}$ that borrows from the circuit characterization of matroids. A (not necessarily uniform) clutter is then said to be `chordal' in this context if every \emph{minor} of ${\mathcal C}$ admits a simplical vertex. 

In yet another direction Bigdeli, Yazdan Pour, and Zaare-Nahandi \cite{BPZ} employ the notion of a simplicial \emph{ridge} to study chordality.  Recall that a \defi{ridge} in a $d$-clutter ${\mathcal C}$ is a set $R$ of vertices of size $d-1$ such that $R \subset e$ for some circuit $e \in {\mathcal C}$ (note that in the setting of graphs a vertex is also a ridge).  From \cite{BPZ} a ridge is said to be \defi{simplicial} if the induced subclutter on $R \cup \{v \in [n]: R \cup \{v\} \in {\mathcal C} \}$ is the complete $d$-uniform clutter.  A clutter ${\mathcal C}$ is then `chordal' in this context if there exists a sequence of ridges $R_1, R_2, \dots, R_k$ of ${\mathcal C}$ such that $R_i$ is simplicial in the clutter ${\mathcal C} - (R_1 \cup \cdots \cup R_{i-1})$, and ${\mathcal C} - (R_1 \cup \cdots \cup R_k) = \emptyset$.  To distinguish this notion from others we will call such a clutter \defi{BYZ-chordal} in what follows.  In \cite{BHPZ} it is shown that if ${\mathcal C}$ is BYZ-chordal then the ideal $I_{\overline{\mathcal C}}$ has a linear resolution over every field. In fact the class of BYZ-chordal $d$-clutters includes the classes of chordal clutters described above.   There do, however, exist monomial squarefree ideals with a linear resolution over every field that do not arise as complements of BYZ-chordal clutters (for example the clutter ideal coming from a certain triangulation of a dunce hat).  There also exist ridge chordal $d$-clutters ${\mathcal C}$ with the property that $I_{\overline{\mathcal C}}$ does not have linear quotients.  As far as we know the following question is still open \footnote{Since posting this paper, a counterexample to this statement has been found by Benedetti and Bolognini, see \cite{BenBol}.}.

\begin{conj} \label{conj:ridge}
If ${\mathcal C}$ is a $d$-clutter with the property that $I_{\overline{\mathcal C}}$ has linear quotients, then ${\mathcal C}$ is BYZ-chordal.
\end{conj}

\begin{rem}
Our constructions are also related to the notion of BYZ-chordality from \cite{BPZ} as follows.  If ${\mathcal C}$ is a $d$-clutter on vertex set $[n]$, let ${\mathcal C}^{d+1}$ denote the $(d+1)$-clutter on the same vertex set, with circuits given by all cliques in ${\mathcal C}$ of size $d+1$.   For example if $d=2$ (so that ${\mathcal C}$ is a graph) then ${\mathcal C}^{d+1}$ consists of all triangles in the underlying graph.
One can then show that $e$ is an exposed cricuit of ${\mathcal C}$ if and only if $e$ is a simplicial ridge of ${\mathcal C}^{d+1}$.  We thank Mina Bigdeli for pointing this out to us \cite{BigPer}. This observation also provides an alternative proof of a result in \cite{NikZaa}, where it is shown that for a graph $G$ a sequence of edges $e_1, \dots, e_t$ is a simplicial sequence of ridges in $G^3$ if and only if the ideal $I_{\overline G}$ has linear quotients.  

Conversely we note that if ${\mathcal C}$ is a $d$-clutter with an exposed edge $e$ it is \emph{not} necessarily the case that we can find an element $e$ such that $e \backslash \{v\}$ is a simplicial ridge of ${\mathcal C}$.  
\end{rem}

\section{Final Remarks}

As we mentioned above, the study of edge erasures in chordal graphs developed in \cite{CulGurSti} was originally motivated by questions involving clustering algorithms and in particular generalizations of single-linkage clustering.  Finding a minimal spanning tree of a weighted complete graph (equivalently a finite metric space) provides the basis for single-linkage clustering. The hope was that minimal chordal graphs may serve a similar role for more general clustering algorithms that allow \emph{overlaps}.   It is not clear if the chordal $d$-clutters discussed here might have any relevance to these constructions. For this one might want a generalization of a metric space where $d$-tuples of points are assigned a `distance'.

In the process of generalizing Kruskal's algorithm for finding minimal spanning trees, the authors of \cite{CulGurSti} use the following property of properly exposed edges in a chordal graph.

 \begin{thm}\cite[Theorem~12]{CulGurSti} \label{thm:cycles}
 Suppose $G$ is a chordal graph, and let $\partial G$ denote the edge-induced subgraph of $G$ determined by the properly exposed edges of $G$.  Then every connected component of $\partial G$ is $2$-edge connected.
  \end{thm}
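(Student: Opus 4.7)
To prove the theorem, it suffices to show that every edge of $\partial G$ lies in a cycle entirely within $\partial G$, which is equivalent to the $2$-edge-connectivity of each connected component. Fix a properly exposed edge $e = uv$ in $G$, contained in the unique maximal clique $K$ with $|K| \geq 3$. By the uniqueness of $K$, the common neighbors of $u$ and $v$ in $G$ are exactly the elements of $W := K \setminus \{u, v\}$, which is nonempty.

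The plan is a case analysis aimed at explicitly constructing a cycle through $e$. The easy case: if some $w \in W$ has both $uw, vw$ properly exposed, then the triangle $\{uv, uw, vw\}$ gives the desired cycle in $\partial G$. The harder case is when for every $w \in W$, at least one of $uw, vw$ is contained in some maximal clique of $G$ other than $K$. In this case, pick such a $w$ and (WLOG) a maximal clique $K' \ne K$ containing $uw$. Since $K' \not\subset K$ by the maximality of $K'$, there is some $x \in K' \setminus K$, and a short chordality argument shows $x \sim u$, $x \sim w$, but $x \not\sim v$: indeed, if $x \sim v$ then $\{u,v,w,x\}$ would form a clique containing $\{u,v\}$ and extending $K$, contradicting uniqueness. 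Thus $u$--$x$--$w$ is a candidate detour replacing the edge $uw$, and a symmetric construction handles the $v$-side if $vw$ is also not properly exposed.

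The main obstacle is showing these local detours always close into a valid cycle of \emph{properly exposed} edges; the substitute edges $ux, xw$ need not themselves be properly exposed, and one may need to iterate the detour construction. I would resolve this by induction on $|V(G)|$, picking a simplicial vertex $s$ of $G$ distinct from $u, v$ (which exists unless $G$ is very small, since every chordal graph with at least two vertices has two simplicial vertices). Applying the inductive hypothesis to the chordal graph $G - s$ yields a cycle through $e$ in $\partial(G - s)$, which we then lift to $\partial G$ by examining how the properly-exposed status of edges in $G[N(s)]$ changes when $s$ is reintroduced; the star of properly exposed edges at $s$ (all contained in the unique maximal clique $\{s\} \cup N(s)$) provides the raw material to rewire the inductive cycle around any edges whose status changes. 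Alternatively, one can induct on the length of an edge-erasure sequence producing $G$ from $K_n$ (Corollary~\ref{cor:erasures}), checking that $2$-edge-connectivity of components of $\partial G_{i-1}$ propagates under the next erasure; the delicate bookkeeping of how maximal cliques split when $e_i$ is deleted is essentially the same in either approach, and is where the real work of the proof lies.
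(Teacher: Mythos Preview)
The paper does not prove this theorem; it is quoted from \cite{CulGurSti} in the Final Remarks section and stated without proof. So there is no in-paper argument to compare against, and your proposal has to stand on its own.

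As written, it does not. The reduction to ``every properly exposed edge lies on a cycle in $\partial G$'' is correct, and the easy triangle case is fine. But you explicitly defer the hard case, writing that ``the real work of the proof lies'' in bookkeeping you never carry out. That is an acknowledged gap, not a proof.

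The inductive scheme you sketch also has concrete obstacles you have not addressed. First, for the hypothesis to apply to $G-s$ you need $e$ to remain properly exposed there; if $s\in K$ and $|K|=3$ this fails. (One can arrange $s\notin K$ when $G$ is not a single clique, using the two non-adjacent simplicial vertices of a non-complete chordal graph, but you do not say this.) Second, and more seriously, an edge $f$ on the inductively obtained cycle in $\partial(G-s)$ need not be properly exposed in $G$: if both endpoints of $f$ lie in $N(s)$, then in $G$ the edge $f$ may sit in two distinct maximal cliques (its old one and $\{s\}\cup N(s)$), so $f\notin\partial G$. Your proposed ``rewiring'' using the star of properly exposed edges at $s$ is only a slogan; you would need to show that every such broken edge can be replaced by a path of properly exposed edges in $G$ without destroying the cycle, and nothing in the sketch establishes this. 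A small example already shows the difficulty: take $G$ on $\{1,\dots,5\}$ with maximal cliques $\{1,2,3\}$, $\{1,3,4\}$, $\{2,3,5\}$. Then $e=12$ is properly exposed in $K=\{1,2,3\}$, but \emph{neither} $13$ nor $23$ is exposed, so your detour construction is forced immediately and must be iterated; the actual cycle through $12$ in $\partial G$ is the $5$-cycle $1\,2\,5\,3\,4$, which your local moves do not obviously produce.
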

 
We do not know if something similar holds in the context of higher-dimensional $d$-clutters.  Is it the case that properly exposed circuits in a chordal $d$-clutter are also contained in some version of higher-dimensional cycles? One also wonders if there is an interpretation of Theorem \ref{thm:cycles} in the context of commutative algebra.

Another natural question to ask is if the generalization of Kruskal's algorithm that relies on Theorem \ref{thm:cycles} can be generalized to the context of higher dimensional complexes or more general matroids.  In particular given a circuit-weighted $d$-clutter ${\mathcal C}$ can one find a minimal `spanning tree' by removing properly exposed faces?  Note that the connectivity of such a spanning tree is not simply vanishing of the top $(d-1)$-homology of the simlicial complex $\Delta$ defined by the circuits (thought of as facets of $\Delta$), but also vanishing of the $(d-2)$ homology of the complex obtained by also including its complete $(d-2)$-skeleton (see Figure \ref{fig:connclutter}).

Finally, one wonders what role of edge/circuit weighted graphs and $d$-clutters might play in combinatorial commutative algebra. For instance if we assign weights to all quadratic squarefree monomials $x_ix_j$ in the polynomial ring $R = {\mathbb K}[x_1, \dots, x_n]$ our discussion above implies that among all  `minimal' quadratic monomial ideals $I$ with ${n \choose 2} - n + 1$ generators satisfying $\pdim I < n-2$, we can find such an ideal with the property that $I$ has linear quotients.  Here the weight of a monomial ideal is the sum of the weights of its generators (in its minimal set of generators).

\end{document}